\title{Reducing the domination number of $P_3+kP_2$-free graphs via one edge contraction}
\author[1]{E. Galby}
\author[2]{F. Mann}
\author[2]{B. Ries}
\affil[1]{CISPA Helmholtz Center for Information Security, Saarbr\"ucken, Germany}
\affil[2]{University of Fribourg, Department of Informatics, Fribourg, Switzerland}
\date{}
\begin{document}

\maketitle

\begin{abstract}
    In this note, we consider the following problem: given a connected graph $G$, can we reduce the domination number of $G$ by using only one edge contraction? We show that the problem is polynomial-time solvable on $P_3+kP_2$-free graphs for any $k \geq 0$ which combined with results of \cite{beith,dagstuhl} leads to a complexity dichotomy of the problem on $H$-free graphs.
\end{abstract}

\section{Introduction}

Given a graph $G=(V(G),E(G))$, a \emph{dominating set} of $G$ is a subset $D \subseteq V(G)$ of vertices such that every vertex in $V(G) \setminus D$ has a neighbor in $D$, and $D$ is minimum if it has minimum cardinality amongst all dominating sets of $G$. The \emph{domination number} of $G$ is the cardinality of a minimum dominating set of $G$. The \emph{contraction} of an edge $uv \in E(G)$ removes vertices $u$ and $v$ from $G$ and replaces them with a new vertex which is made adjacent to precisely those vertices that were adjacent to $u$ or $v$ in $G$ (without introducing self-loops nor multiple edges). In this note, we consider the following problem.

\begin{center}
\begin{boxedminipage}{.99\textwidth}
\textsc{1-Edge Contraction($\gamma$)}\\[2pt]
\begin{tabular}{ r p{0.8\textwidth}}
\textit{~~~~Instance:} &A connected graph $G$.\\
\textit{Question:} &Does there exist an edge $e\in E(G)$ such that contracting $e$ reduces the domination number by at least one?
\end{tabular}
\end{boxedminipage}
\end{center}

It was shown in \cite{beith} that the problem is $\mathsf{NP}$-hard in general graphs. As a  consequence, the authors considered restrictions of the input to special graph classes and proved in particular the following.

\begin{theorem}[\cite{beith}, Corollary 1.4]
\contracd is $\mathsf{NP}$- or $\mathsf{coNP}$-hard on $H$-free graphs if $H$ is not a linear forest\footnote{A linear forest is a forest of maximum degree 2, or equivalently, a disjoint union of paths.} or contains an induced $P_6$, $P_4+P_2$ or $2P_3$, and solvable in polynomial time if $H$ is an induced subgraph of $P_5+qP_1$ for some $q\geq 0$.
\end{theorem}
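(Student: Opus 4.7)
My plan is to treat the two halves of the dichotomy separately.

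For the polynomial direction, observe that $H$-freeness is monotone under induced subgraphs, so if $H$ is an induced subgraph of $P_5+qP_1$ then every $H$-free graph is $(P_5+qP_1)$-free. It thus suffices to design a polynomial-time algorithm on $(P_5+qP_1)$-free graphs for every fixed $q\geq 0$. My approach is first to compute $\gamma(G)$ by adapting the known polynomial-time algorithm for the domination number on $P_5$-free graphs, absorbing the $qP_1$ part via an $O(n^q)$ branching over how a minimum dominating set interacts with the ``isolated-like'' vertices. Then, for each edge $e=uv$, I would give a structural characterization of when contracting $e$ decreases $\gamma$---typically in terms of the existence of a minimum dominating set of $G$ with a prescribed interaction with $\{u,v\}$, for instance containing both endpoints---and verify this condition by further polynomial-time domination queries on carefully chosen induced subgraphs, all of which remain $(P_5+qP_1)$-free.

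For the hardness direction, I would split into cases. If $H$ is not a linear forest, then either $H$ has a vertex of degree at least three, in which case a standard class such as subcubic or claw-free graphs is $H$-free and the problem is known to be hard on such classes; or $H$ contains an induced cycle, and iteratively subdividing edges of a hard instance yields graphs of girth larger than any cycle of $H$ (hence $H$-free), provided the subdivision gadget is chosen to preserve the existence of a $\gamma$-decreasing contraction. If instead $H$ is a linear forest containing one of $P_6$, $P_4+P_2$, $2P_3$ as an induced subgraph, I would build three separate reductions, one per forbidden subgraph, from a suitable NP-hard source problem to the contraction problem on graphs avoiding that specific subgraph; the standard device is to envelope variable/clause gadgets in cliques large enough to block any long induced linear forest crossing multiple gadgets.

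The main obstacle, in my view, is the third case: simultaneously encoding an NP-hard source problem and banning one of $P_6$, $P_4+P_2$, or $2P_3$ in the output requires delicate gadget design, since naive cliquification tends to merge distinct gadgets and breaks the reduction. A secondary challenge on the polynomial side is that $(P_5+qP_1)$-freeness is not preserved under edge contraction in general, so the algorithm cannot simply invoke the tractable domination algorithm on $G/e$; instead, the decision ``$\gamma(G/e)<\gamma(G)$?'' must be carried out via a structural characterization internal to $G$.
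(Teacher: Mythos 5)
This statement is not proven in the paper at all: it is quoted verbatim from \cite{beith} (Corollary~1.4) as an imported result, so there is no internal proof to compare your proposal against. Judging the proposal on its own merits, however, it contains a fatal gap in the polynomial-time half. Your plan starts from ``adapting the known polynomial-time algorithm for the domination number on $P_5$-free graphs,'' but no such algorithm exists unless $\mathsf{P}=\mathsf{NP}$: split graphs are $2K_2$-free, hence $P_5$-free, and \textsc{Minimum Dominating Set} is a classical $\mathsf{NP}$-hard problem on split graphs. This is precisely what makes the cited result nontrivial --- \contracd\ is polynomial-time solvable on $P_5$-free graphs even though computing $\gamma$ on that class is not. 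Consequently, any strategy whose first step is ``compute $\gamma(G)$, then test each edge'' cannot work. The route taken in the literature (and mirrored throughout the present paper) avoids computing $\gamma$ entirely: by \Cref{NOInstStable}, $G$ is a \yes-instance if and only if some \emph{minimum} dominating set of $G$ fails to be a stable set, and this property is decided by structural arguments specific to the graph class; the $qP_1$ part is absorbed not by branching over a computed dominating set, but by the black-box reduction of \Cref{boundedgamma}, which lifts polynomial solvability from $H$-free to $(H+K_1)$-free graphs. Your secondary remark --- that $(P_5+qP_1)$-freeness is not preserved under contraction, so the test must be internal to $G$ --- is correct, and is exactly the role played by \Cref{NOInstStable}, but it cannot repair the first step.

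The hardness half of your plan is also essentially circular: you assert that the problem ``is known to be hard'' on subcubic, claw-free, or large-girth graphs, yet establishing hardness on such $H$-free classes (when $H$ is not a linear forest, or contains $P_6$, $P_4+P_2$ or $2P_3$) is precisely the content of the statement; \cite{beith} must and does construct these reductions explicitly. You correctly identify that the cases $P_6$, $P_4+P_2$ and $2P_3$ require delicate gadget design, but identifying the difficulty is not the same as resolving it, so this half remains a plan rather than a proof.
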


To obtain a complexity dichotomy of \contracd on $H$-free graphs, there only remains to settle the complexity of the problem when $H$ is an induced subgraph of $P_3+kP_2+qP_1$ for some $k \geq 1$ and $q\geq 0$. In this paper we solve this remaining case and prove the following. 

\begin{theorem}\label{thm:P3kP2}
\contracd is polynomial-time solvable on $P_3+kP_2$-free graphs for any $k \geq 0$.
\end{theorem}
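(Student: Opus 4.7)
My plan combines a characterization of when contraction reduces $\gamma$ with the structural constraints imposed by $P_3+kP_2$-freeness. The characterization, which follows from a direct case analysis on whether the contracted vertex lies in a minimum dominating set of $G/uv$ (see also \cite{beith}), is as follows: contracting an edge $uv\in E(G)$ reduces $\gamma(G)$ if and only if $G$ admits a minimum dominating set $D$ such that either (i) $\{u,v\}\subseteq D$, or (ii) $u\in D$, $v\notin D$, and $v$ is the only vertex of $N[u]$ not dominated by $D\setminus\{u\}$ (in particular, $u$ has some neighbor in $D\setminus\{u\}$), or the symmetric of (ii) with $u,v$ swapped. The base case $k=0$ is trivial: a connected $P_3$-free graph is a clique with $\gamma=1$, so contraction cannot reduce $\gamma$. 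Hence assume $k\geq 1$.

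Each of (i) and (ii) can be recast as a \emph{constrained} domination problem on $G$: some vertices are forced into $D$ (at least $u$, and in (i) also $v$), some are forbidden (for (ii), the set $N[v]\setminus\{u\}$), and in (ii) a further ``unique private neighbor'' requirement is imposed. Thus it suffices to show that these constrained problems are polynomial-time solvable on $P_3+kP_2$-free graphs, after which one compares each resulting value to $\gamma(G)$.

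The structural lever is that for any induced $P_3$ with vertex set $T=\{a,b,c\}$ in a $P_3+kP_2$-free graph $G$, the subgraph induced on $V(G)\setminus N[T]$ is $kP_2$-free, hence has matching number at most $k-1$ and a vertex cover $C$ of size at most $2(k-1)$. The algorithm either observes $G$ is $P_3$-free (so $G$ is a clique and the answer is immediate), or picks such a $T$, branches over the $n^{O(k)}$ possible choices of $D\cap C$ and of a bounded number of twin-like equivalence classes on $N[T]$ (grouping vertices by their adjacencies to the guessed part of $D$), and reduces the residual instance to a domination problem on an essentially independent set, which is solvable in polynomial time by a covering argument.

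The main obstacle I anticipate is the case analysis required to faithfully translate the ``unique private neighbor'' condition of (ii) through the structural decomposition: enforcing it requires tracking which potential dominators are allowed in $D$ and how their choices affect the private-neighbor status of $v$, which interacts non-trivially with the enumeration over twin classes, and one must take care that $k$ enters the running time only through the branching exponent and not as an exponent of $n$.
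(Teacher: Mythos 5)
Your opening characterization of when contracting $uv$ reduces $\gamma$ is correct, but the algorithm you build on it has two concrete flaws, one of them fatal to the whole strategy. First, the claimed structural lever is false: a graph with no \emph{induced} $kP_2$ need not have matching number at most $k-1$ --- a clique is $2P_2$-free yet has arbitrarily large matchings --- so $G[V(G)\setminus N[T]]$ admits no bounded vertex cover; only its \emph{induced} matching number is bounded by $k-1$. (This is precisely why the paper anchors its decomposition on a set $A$ inducing $P_3+(k-1)P_2$ rather than a single $P_3$: with that choice the vertices at distance two from $A$ form a stable set, which is the structural fact that actually holds.) Second, and more fundamentally, your reduction ``it suffices to solve the constrained domination problems and compare each value to $\gamma(G)$'' reduces \contracd\ to problems that are NP-hard on the very class at hand: split graphs are $2P_2$-free, hence $P_3+kP_2$-free for every $k\geq 1$, and \textsc{Dominating Set} is NP-hard on split graphs (a classical result of Bertossi). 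So even computing $\gamma(G)$ --- the unconstrained special case of your subproblems --- cannot be done in polynomial time on $P_3+kP_2$-free graphs unless $\mathsf{P}=\mathsf{NP}$. The same objection hits your final step: dominating a residual ``essentially independent set'' is a Set-Cover-type problem, and the $n^{O(k)}$ branching does not remove this hardness because the dominating sets in question may be arbitrarily large, so their intersection with the residual part cannot be enumerated.

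The paper sidesteps this trap by never computing $\gamma(G)$. It works instead with the characterization that $G$ is a \yes-instance if and only if some \emph{minimum} dominating set is not a stable set (\Cref{NOInstStable}), and then proves that in a $P_3+kP_2$-free graph one of two things happens: either $\gamma(G)$ is bounded by a function of $k$ alone, in which case \Cref{boundedgamma} applies and bounded-size enumeration suffices, or there exist ``regular'' vertices (vertices at distance two from $A$ with clique neighbourhoods, pairwise far apart) around which every minimum dominating set is forced up to a remainder of bounded size; stability of minimum dominating sets can then be decided by brute force over that bounded remainder. Any repair of your approach would need an analogous mechanism for certifying minimality of a dominating set without ever solving \textsc{Dominating Set} on the class, and that is exactly the content of the paper's sequence of claims bounding $|B\cap D|$ and $|D\setminus\mathcal{R}|$.
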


Combined with results of \cite{beith,dagstuhl}, we obtain the following dichotomy.

\begin{theorem}\label{thm:dichotomy}
\contracd is polynomial-time solvable on $H$-free graphs if and only if $H$ is an induced subgraph of $P_5+qP_1$ for some $q\geq 0$ or an induced subgraph of $P_3+kP_2+qP_1$ for some $k,q\geq 0$, unless $\mathsf{P} = \mathsf{NP}$.
\end{theorem}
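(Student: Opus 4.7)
The plan is to prove the dichotomy by splitting it into the tractable direction and the hard direction, and then further splitting the tractable direction according to the two families of $H$ listed in the statement.

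For the tractable direction, when $H$ is an induced subgraph of $P_5+qP_1$, the polynomial-time algorithm is already provided by Theorem~1.1 of~\cite{beith}. The new content is the case where $H$ is an induced subgraph of $P_3+kP_2+qP_1$, which I would reduce to Theorem~\ref{thm:P3kP2} through the following simple embedding. Since $P_1$ is an induced subgraph of $P_2$ (pick any single vertex), $qP_1$ is an induced subgraph of $qP_2$, and hence $P_3+kP_2+qP_1$ is an induced subgraph of $P_3+(k+q)P_2$. Being $F$-free is inherited when $F$ is enlarged to an induced supergraph, so every $(P_3+kP_2+qP_1)$-free graph is in particular $(P_3+(k+q)P_2)$-free, and Theorem~\ref{thm:P3kP2} applied with parameter $k+q$ yields the desired algorithm; the same inclusion covers all induced subgraphs of $P_3+kP_2+qP_1$.

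For the hard direction, I would take $H$ that is neither an induced subgraph of $P_5+qP_1$ nor of $P_3+kP_2+qP_1$ for any $k,q\geq 0$, and show that $H$ triggers one of the hardness conditions of Theorem~1.1, namely: $H$ is not a linear forest, or contains an induced $P_6$, $P_4+P_2$, or $2P_3$. Assuming $H$ is a linear forest (otherwise we are done), I would case split on the length $i$ of its longest component. If $i\geq 6$ then $P_6\subseteq H$. If $i=5$, then either every other component is a $P_1$ (so $H=P_5+qP_1$, contradicting our assumption) or some other component contains an edge, producing $P_4+P_2\subseteq H$. The case $i=4$ is analogous, forcing $H=P_4+qP_1\subseteq P_5+qP_1$ or $P_4+P_2\subseteq H$. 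If $i=3$, then two components on at least three vertices would give $2P_3$, so $H$ contains at most one $P_3$ and all remaining components are $P_1$ or $P_2$, yielding $H=P_3+kP_2+qP_1$, a contradiction. Finally, if $i\leq 2$, then $H=kP_2+qP_1$, an induced subgraph of $P_3+kP_2+qP_1$, again contradicting the assumption.

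The main point of care is the structural case analysis in the hardness direction, which must be made exhaustive, and the clean identification of the embedding $P_3+kP_2+qP_1\hookrightarrow P_3+(k+q)P_2$ that turns Theorem~\ref{thm:P3kP2} into an algorithm for the full family with isolated vertices. No new algorithmic ingredients are required beyond Theorem~\ref{thm:P3kP2} and the hardness classification from~\cite{beith,dagstuhl}.
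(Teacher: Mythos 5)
Your proposal is correct, and it reaches the dichotomy by a slightly different route than the paper in the tractable direction. The paper's intended argument (it is left implicit, signalled by ``combined with results of \cite{beith,dagstuhl}'') handles the family $P_3+kP_2+qP_1$ by invoking the second statement of \Cref{boundedgamma} from \cite{dagstuhl}: since \contracd is polynomial-time solvable on $(P_3+kP_2)$-free graphs by \Cref{thm:P3kP2}, applying the $(H+K_1)$-closure $q$ times yields polynomial-time solvability on $(P_3+kP_2+qP_1)$-free graphs; this is in fact the only place where that second statement of \Cref{boundedgamma} is needed, as the proof of \Cref{thm:P3kP2} itself only uses the bounded-domination-number part. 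You instead observe that $P_3+kP_2+qP_1$ is an induced subgraph of $P_3+(k+q)P_2$, so every $(P_3+kP_2+qP_1)$-free graph is $(P_3+(k+q)P_2)$-free and \Cref{thm:P3kP2} applies directly with parameter $k+q$. This is a clean, more elementary shortcut that makes the $(H+K_1)$ lemma superfluous for the dichotomy; it works precisely because the family $P_3+kP_2$ is parameterized so that the parameter can absorb the isolated vertices, whereas the paper's route via \cite{dagstuhl} is the generic tool one needs when the base graph cannot absorb $P_1$'s (as for $P_5+qP_1$, which both you and the paper take straight from \cite{beith}). Your hardness direction --- the exhaustive case analysis on the longest component of a linear forest avoiding $P_6$, $P_4+P_2$ and $2P_3$ --- is exactly the classification underlying the paper's appeal to Corollary 1.4 of \cite{beith}; the paper leaves it implicit, and your explicit treatment of it is complete and correct, including the observation that $\mathsf{coNP}$-hardness also precludes polynomial-time solvability unless $\mathsf{P}=\mathsf{NP}$.
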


\section{Preliminaries}

Throughout this paper the considered graphs are finite, simple and connected, unless stated otherwise. 

For $n\geq 1$, the path on $n$ vertices is denoted by $P_n$. If $G$ is a graph and $k\in\mathbb{N}$, then we denote by $kG$ the graph consisting of $k$ disjoint copies of $G$. 

Given a graph $G$, we denote by $V(G)$ its vertex set and by $E(G)$ its edge set. The \textit{(open) neighbourhood} $N(v)$ of a vertex $v\in V(G)$ is the set $\set{w\in V(G)\colon\, vw\in E(G)}$. The \textit{closed neighbourhood} $N[v]$ of a vertex $v\in V(G)$ is the set $N(v)\cup\set{v}$. If $S\subseteq V(G)$ and $v\in V(G)$ then we say that $v$ is \textit{complete} to $S$ if $v$ is adjacent to every vertex in $S$. For $S\subseteq V(G)$ we write $G[S]$ for the graph \textit{induced by $S$}, that is, the graph with vertex set $V(G[S])=S$ and edge set $E(G[S])=\set{xy\in E(G)\colon\, x,y\in S}$. A set $S\subseteq V(G)$ is called a \textit{clique} (respectively a \textit{stable set}) if every two vertices in $S$ are adjacent (respectively non-adjacent). For two vertices $x,y\in V(G)$ the \textit{distance $d(x,y)$ from $x$ to $y$} is the number of edges in any shortest path between $x$ and $y$. Given a dominating set $D$ of $G$ and a vertex $u\in D$, every vertex $v\in V(G)$ such that $N[v]\cap D=\set{u}$ is called a \textit{private neighbour of $u$}.

The following theorem characterises the \yes-instances for \contracd.
\begin{theorem}[\cite{HuangXu}, Lemma 3.5]\label{NOInstStable}
A graph $G$ is a \yes-instance for \contracd if and only if there exists a minimum dominating set of $G$ which is not a stable set.
\end{theorem}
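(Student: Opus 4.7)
The plan is to prove both implications by direct dominating-set manipulation, translating back and forth between a minimum dominating set of $G$ and one of the contracted graph.

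For the ``if'' direction, suppose $D$ is a minimum dominating set of $G$ containing two adjacent vertices $u,v$. I would contract the edge $uv$ into a new vertex $w$ and observe that $(D\setminus\{u,v\})\cup\{w\}$ is a dominating set of $G/uv$: every vertex previously dominated by $u$ or $v$ in $G$ is now adjacent to $w$, and every other vertex keeps its original dominator. Its size is $|D|-1=\gamma(G)-1$, so $\gamma(G/uv)\leq\gamma(G)-1$, witnessing that $G$ is a \yes-instance.

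For the ``only if'' direction, let $e=xy$ be an edge whose contraction reduces the domination number, write $w$ for the merged vertex, and let $D'$ be a minimum dominating set of $G/e$, so $|D'|\leq\gamma(G)-1$. I would case-split on whether $w\in D'$. If $w\in D'$, I set $D:=(D'\setminus\{w\})\cup\{x,y\}$; this is a dominating set of $G$ of size $|D'|+1$. If $w\notin D'$, then $D'\subseteq V(G)\setminus\{x,y\}$ and, since $D'$ must dominate $w$ in $G/e$, some $u\in D'$ is adjacent in $G$ to $x$ or to $y$; taking $ux\in E(G)$ without loss of generality, I set $D:=D'\cup\{x\}$, which dominates $G$ (vertices outside $\{x,y\}$ exactly as they were in $G/e$, and $y$ via $x$) and has size $|D'|+1$. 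In both sub-cases, since any dominating set of $G$ has size at least $\gamma(G)$, the sandwich $\gamma(G)\leq|D|\leq|D'|+1\leq\gamma(G)$ forces $|D|=\gamma(G)$, so $D$ is minimum; and $D$ contains either the edge $xy$ or the edge $ux$, hence is not a stable set.

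The forward direction is essentially immediate. The subtle step is the second sub-case of the converse: one must add to $D'$ the \emph{specific} endpoint of $e$ that has a neighbor inside $D'$ in $G$, since otherwise the new vertex could end up isolated inside $D$ and we would have no edge to exhibit. The case split on whether $w\in D'$, combined with this careful choice of endpoint, is the whole content of the proof; no further structural information about $G$ is needed, which is why the characterization holds for arbitrary graphs and can be used as a black box throughout the paper.
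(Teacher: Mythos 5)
Your proof is correct. Note that the paper itself gives no proof of this statement -- it is imported verbatim as Lemma 3.5 of the cited reference \cite{HuangXu} and used as a black box -- so there is nothing internal to compare against; your two-directional argument (contracting an edge inside a minimum dominating set for one direction, and lifting a minimum dominating set of $G/e$ back to $G$ with the case split on whether the merged vertex lies in it for the other) is the standard proof of this equivalence, and your handling of the second sub-case, where one must add the endpoint of $e$ that actually has a neighbour in $D'$, is exactly the point that needs care.
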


We will also use the following theorem which presents some cases where \contracd is polynomial-time solvable.

\begin{theorem}[\cite{dagstuhl}, Proposition 12]\label{boundedgamma}
\contracd can be solved in polynomial time for a graph class $\mathcal{C}$ if for every graph $G\in\mathcal{C}$ we have $\gamma(G)\leq q$, where $q$ is some fixed constant. If $H$ is a graph and \contracd can be solved in polynomial time for $H$-free graphs, then \contracd\ can be solved in polynomial time for $(H+K_1)$-free graphs.

\end{theorem}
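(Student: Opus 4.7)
The statement has two essentially independent claims and I would prove them in the order written, with the first functioning as a lemma for the second.

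For the first claim, the hypothesis $\gamma(G)\leq q$ for a fixed constant $q$ makes brute force feasible. I would enumerate all $O(n^{q})$ subsets of $V(G)$ of size at most $q$, check in linear time which of them are dominating sets, and thereby compute $\gamma(G)$. Then, among the subsets of size exactly $\gamma(G)$, I would look for one that both dominates $G$ and induces at least one edge. By Theorem \ref{NOInstStable}, such a subset exists if and only if $G$ is a \yes-instance of \contracd. Since $q$ is a constant, the total running time is $n^{O(q)}$, which is polynomial.

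For the second claim, the key observation is that if a connected $(H+K_1)$-free graph $G$ contains an induced copy of $H$ with vertex set $S$, then every vertex $v\in V(G)\setminus S$ must have a neighbour in $S$, for otherwise $G[S\cup\{v\}]$ would be an induced $H+K_1$. Consequently $S$ is itself a dominating set of $G$, and $\gamma(G)\leq|V(H)|$. Given an input $(H+K_1)$-free graph $G$, I would first test in time $n^{O(|V(H)|)}$ (polynomial since $H$ is fixed) whether $G$ contains an induced copy of $H$. If not, $G$ is $H$-free and I invoke the hypothesised polynomial-time algorithm. If so, $\gamma(G)$ is bounded by the constant $|V(H)|$ and the first claim applies.

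I do not foresee any real obstacle: the first claim is direct enumeration justified by Theorem \ref{NOInstStable}, and the second is the standard observation that forbidding $H+K_1$ forces any induced copy of $H$ to be a dominating set, which reduces the problem either to the $H$-free hypothesis or to the bounded-$\gamma$ setting already handled. The only point that warrants a line of comment in the final write-up is that the polynomial exponents produced by the two enumeration procedures depend only on $q$ and $|V(H)|$, not on the size of the input graph.
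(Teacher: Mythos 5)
Your proposal is correct. Note that the paper does not prove this statement itself---it imports it from \cite{dagstuhl} (Proposition 12)---so there is no in-paper proof to compare against; your two-part argument (brute-force enumeration of all $O(n^{q})$ vertex subsets combined with the characterisation of \Cref{NOInstStable} for the first claim, and the observation that any induced copy of $H$ in an $(H+K_1)$-free graph must dominate the whole graph, reducing the second claim either to the $H$-free hypothesis or to the bounded-$\gamma$ case, for the second) is sound and is the standard proof of this result.
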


\section{Proof of \Cref{thm:P3kP2}}

First observe that if a graph $G$ does not contain an induced $P_3$ then $G$ is a clique and thus a \no-instance for \contracd. Assume henceforth that $k \geq 1$ and let $G$ be a $P_3+kP_2$-free containing an induced $P_3+(k-1)P_2$. Let $A\subseteq V(G)$ be such that $G[A]$ is isomorphic to $P_3+(k-1)P_2$, let $B\subset V(G)$ be the set of vertices at distance one from $A$ and let $C\subset V(G)$ the set of vertices at distance two from $A$. Note that since $G$ is $P_3+kP_2$-free, the sets $A,B$ and $C$ partition $V(G)$ and $C$ is a stable set. Denote by $\mathcal{C}\subseteq C$ the set of vertices whose neighbourhoods are cliques. We call a vertex $v_1\in\mathcal{C}$ a \emph{regular vertex} if there exist $k$ vertices $v_2,\ldots,v_{k+1}\in\mathcal{C}$ such that $v_1,\ldots,v_{k+1}$ are pairwise at distance at least four from one another. We denote by $\mathcal{R}$ the set of regular vertices.

\begin{claim}\label{VertexCompleteness}
Let $c_1\in\mathcal{R}$ be a regular vertex. If a vertex $v\not\in N[c_1]$ is adjacent to a vertex in $N(c_1)$ then there exists a regular vertex $c\in\mathcal{R}$ such that $v$ is complete to $N(c)$.
\end{claim}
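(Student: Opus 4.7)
The plan is to exploit the $P_3+kP_2$-freeness of $G$ by exhibiting a forbidden induced subgraph whenever the conclusion fails. First I fix vertices $c_2, \ldots, c_{k+1} \in \mathcal{C}$ witnessing $c_1 \in \mathcal{R}$, so that $c_1, \ldots, c_{k+1}$ are pairwise at distance at least four. By the symmetry of this condition each $c_i$ is itself regular; moreover, whenever $i \neq j$, no vertex of $N[c_i]$ is adjacent to any vertex of $N[c_j]$, since a cross-edge together with the two edges to $c_i$ and $c_j$ would yield a path of length at most three between $c_i$ and $c_j$. This ``separation'' of the closed neighbourhoods $N[c_i]$ is the geometric fact driving everything else.

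Next, pick a neighbour $u \in N(c_1)$ of $v$; since $v \notin N[c_1]$, the triple $v\text{-}u\text{-}c_1$ induces a $P_3$. I then split into two cases according to whether $v$ has a neighbour among $c_2, \ldots, c_{k+1}$. If $v$ is adjacent to some $c_j$ with $j \geq 2$, then $v \in N(c_j)$; since $c_j \in \mathcal{C}$ and $N(c_j)$ is a clique, $v$ is adjacent to every other vertex of $N(c_j)$, so $v$ is complete to $N(c_j)$ and we are done (as $c_j \in \mathcal{R}$). Otherwise, assume for contradiction that $v$ is not complete to $N(c_i)$ for any $i \in \{2, \ldots, k+1\}$, and for each such $i$ pick some $b_i \in N(c_i)$ non-adjacent to $v$. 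Consider
\[
    S \;=\; \{v, u, c_1\} \;\cup\; \bigcup_{i=2}^{k+1} \{c_i, b_i\}.
\]
The goal is to show $G[S] \cong P_3 + kP_2$: the $P_3$ comes from $v\text{-}u\text{-}c_1$, each $\{c_i, b_i\}$ is a $P_2$, all cross-edges not incident to $v$ are forbidden by the separation fact above, and edges from $v$ to any $c_i$ or $b_i$ are ruled out by the case assumption and the choice of the $b_i$'s. This contradicts the $P_3+kP_2$-freeness of $G$.

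The main obstacle I anticipate is precisely this last verification, namely checking that the distance-at-least-four condition in the definition of a regular vertex is strong enough to exclude every one of the cross-edges required; in particular, one must rule out edges between $\{u, c_1\}$ and each $\{c_i, b_i\}$ as well as between distinct pairs $\{c_i, b_i\}$ and $\{c_j, b_j\}$. Once this is in hand, the case analysis on $v$'s adjacencies to $\{c_2, \ldots, c_{k+1}\}$ completes the proof.
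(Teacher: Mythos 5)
Your proposal is correct and follows essentially the same route as the paper's proof: fix $k+1$ pairwise-distant regular vertices $c_1,\ldots,c_{k+1}$ (noting, as you do, that the witnesses of $c_1$ are themselves regular), pick for each $i\geq 2$ a vertex $b_i\in N(c_i)$ non-adjacent to $v$, and observe that $\set{v,u,c_1}\cup\bigcup_{i=2}^{k+1}\set{c_i,b_i}$ induces a forbidden $P_3+kP_2$, the separation of the closed neighbourhoods $N[c_i]$ killing all unwanted edges. The only divergence is your first case ($v$ adjacent to some $c_j$ with $j\geq 2$), which the paper does not need because it is vacuous --- $v$ is at distance at most two from $c_1$, so an edge $vc_j$ would force $d(c_1,c_j)\leq 3$ --- and this is just as well, since the conclusion you draw there (``$v$ is complete to $N(c_j)$'' while $v\in N(c_j)$) does not literally hold under the paper's definition of completeness, $v$ not being adjacent to itself.
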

\begin{proof}
Let $c_1,\ldots,c_{k+1}\in\mathcal{R}$ be $k+1$ regular vertices which are pairwise at distance at least four from one another. Suppose for a contradiction that $v$ is adjacent to $b_1\in N(c_1)$ and for every $j\in [k+1] \setminus \{1\}$, there exists a vertex $b_j\in N(c_j)$ such that $v$ is not adjacent to $b_j$. Then $\bigcup_{i=1}^{k+1}\set{b_i,c_i}\cup\set{v}$ induces a $P_3+kP_2$, a contradiction.
\end{proof}

\begin{claim}\label{OneVertexInDPerRegularVertex}
Let $D$ be a minimum dominating set of $G$ and let $c_1\in\mathcal{R}$ be a regular vertex. Then $|D\cap N[c_1]|=1$.
\end{claim}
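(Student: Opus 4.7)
Since $D$ must dominate $c_1$, trivially $|D\cap N[c_1]|\geq 1$. The plan is to suppose $|D\cap N[c_1]|\geq 2$ and construct a dominating set $D'$ with $|D'|<|D|$, contradicting minimality. The natural candidate is
\[
D'=(D\setminus N[c_1])\cup\{b\}\qquad\text{for some } b\in N(c_1).
\]
Because $c_1\in\mathcal{R}\subseteq\mathcal{C}$, the set $N(c_1)$ is a clique and $b$ is adjacent to every vertex of $N[c_1]$, so $b$ alone dominates $N[c_1]$; the hypothesis $|D\cap N[c_1]|\geq 2$ then gives $|D'|\leq|D|-1$. The only obstacle is to verify that each vertex $v\in V(G)\setminus N[c_1]$ whose neighbours in $D$ all lie in $D\cap N(c_1)$ (not $c_1$ itself, as $v$ is not adjacent to $c_1$) is still dominated by $D'$.

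For such a $v$, Claim~\ref{VertexCompleteness} provides a regular vertex $c$ with $v$ complete to $N(c)$; inspecting the proof of that claim, $c$ can be taken among the $k$ regular vertices $c_2,\ldots,c_{k+1}$ witnessing $c_1\in\mathcal{R}$, so $d(c,c_1)\geq 4$ and $N[c]\cap N[c_1]=\emptyset$. If $D$ contains some $d'\in N(c)$, then $d'\in D\setminus N[c_1]\subseteq D'$ and completeness of $v$ to $N(c)$ forces $vd'\in E(G)$, so $D'$ dominates $v$ and we are done.

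The main obstacle is therefore the possibility that $c\in D$ but $N(c)\cap D=\emptyset$, i.e.\ that $c$ is ``isolated'' in $D$. My plan to handle this is to first replace $D$ by another minimum dominating set $\hat D$ in which no regular vertex is isolated. The replacement is iterative: whenever a regular $c$ is isolated in the current set, swap $c$ for any $b_c\in N(c)$. Because $N(c)$ is a clique, $b_c$ dominates everything $c$ did, so the swap preserves both cardinality and domination; and because $\mathcal{R}\subseteq C$ is a stable set, distinct regular vertices are non-adjacent, from which one verifies that no previously non-isolated regular vertex becomes isolated under the swap, ensuring the procedure terminates. One also checks that $c_1$ itself is never isolated (under the $|D\cap N[c_1]|\geq 2$ hypothesis) and that, since any swapped $c\in\mathcal{R}\setminus\{c_1\}\subseteq C$ satisfies $c\notin N[c_1]$, no swap decreases $|D\cap N[c_1]|$. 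With the modified dominating set $\hat D$, the construction of $D'$ above yields the desired contradiction. Verifying this WLOG reduction --- in particular the termination argument and the preservation of $|D\cap N[c_1]|\geq 2$ --- is the step I expect to require the most care.
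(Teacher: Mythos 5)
Your proof is correct, but it resolves the crux of the argument by a different mechanism than the paper. The paper also runs an exchange argument, but it performs one \emph{simultaneous} swap: fixing the $k+1$ pairwise-distance-at-least-four regular vertices $c_1,\ldots,c_{k+1}$ and a neighbour $b_i\in N(c_i)$ for each $i$, it observes that the pairwise disjoint sets $D\cap N[c_i]$ together contain at least $k+2$ vertices (at least two for $i=1$, at least one for each other $i$), and replaces \emph{all} of them at once by $\{b_1,\ldots,b_{k+1}\}$. Domination is preserved by exactly your two cases --- vertices inside some $N[c_i]$ are caught by $b_i$ via the clique property, and vertices outside are caught via \Cref{VertexCompleteness} --- and the count $k+2 > k+1$ gives the contradiction. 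Because every $N[c_j]$ is swapped out, whether $D$ meets it at $c_j$ itself or inside $N(c_j)$, the ``isolated regular vertex'' obstruction that drives your preprocessing simply never arises. Your route instead does the local swap $(D\setminus N[c_1])\cup\{b\}$ and pays for it with the iterative normalization producing $\hat D$; that lemma is sound (stability of $C$ gives both termination and preservation of $|D\cap N[c_1]|\geq 2$, as you indicate), and it is close in spirit to the paper's \Cref{RegularNeighbourhoodShift}, which the paper only derives \emph{afterwards} from this very proof, so there is no circularity in your doing it first. Note also that both arguments depend on the same strengthened reading of \Cref{VertexCompleteness} --- that the regular vertex $c$ may be chosen among the witnesses $c_2,\ldots,c_{k+1}$, hence at distance at least four from $c_1$ --- which, as you correctly flag, requires inspecting its proof rather than its statement; the paper uses this implicitly as well. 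The trade-off: the paper's global swap is shorter and needs no iteration; yours is more modular but carries the extra bookkeeping of the normalization step.
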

\begin{proof}
Let $c_1,\ldots,c_{k+1}\in\mathcal{R}$ be $k+1$ regular vertices which are pairwise at distance at least four from one another. As for any $i\in[k+1]$ $c_i$ must be dominated, \[\left\vert D\cap \bigcup_{i=1}^{k+1}N[c_i]\right\vert\geq k+1.\] 
Suppose for a contradiction that $\vert D\cap N[c_1]\vert\geq 2$. For every $i\in [k+1]$, let $b_i\in B$ be a vertex adjacent to $c_i$. If a vertex $v\in V(G)$ is adjacent to $D\cap N[c_i]$ for some $i\in[k+1]$ then either $v\in N[c_i]$ in which case $v$ is adjacent or identical to $b_i$, or $v\in V(G)\setminus N[c_i]$ and by \Cref{VertexCompleteness} there exists $j\in[k+1]$ such that $v$ is complete to $N(c_j)$; in particular, $v$ is then adjacent to $b_j$. It now follows that $\left(D\setminus\bigcup_{i\in[k+1]}N[c_i]\right )\cup \set{b_1,\ldots,b_{k+1}}$ is a dominating set of $G$ of cardinality less than $\vert D\vert$, a contradiction to the minimality of $D$.
\end{proof}

By using similar arguments as in the proof of \Cref{OneVertexInDPerRegularVertex}, we can prove the following.
\begin{corollary}\label{RegularNeighbourhoodShift}
Let $c_1,\ldots,c_{k+1} \in \mathcal{R}$ be $k+1$ regular vertices which are pairwise at distance at least four from one another. For every $i\in[k+1]$, let $b_i$ be a neighbour of $c_i$. If $D$ is a minimum dominating set of $G$ then $\left(D\setminus\bigcup_{i\in[k+1]}N[c_i]\right)\cup\set{b_1,\ldots,b_{k+1}}$ is a minimum dominating set of $G$ as well.
\end{corollary}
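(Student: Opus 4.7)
The plan is to mirror the argument of Claim~\ref{OneVertexInDPerRegularVertex}, verifying both that $D' := (D \setminus \bigcup_{i=1}^{k+1} N[c_i]) \cup \{b_1, \ldots, b_{k+1}\}$ has the same cardinality as $D$ and that it dominates every vertex of $G$; minimality of $D'$ then follows immediately from that of $D$.

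First, I would settle the cardinality. The pairwise distance-at-least-four condition on the $c_i$'s ensures the closed neighbourhoods $N[c_i]$ are pairwise disjoint, and Claim~\ref{OneVertexInDPerRegularVertex} gives $|D \cap N[c_i]| = 1$ for each $i$, so exactly $k+1$ vertices are removed in the first step. The $b_i$'s are themselves pairwise distinct (again by the distance condition: $b_i = b_j$ would imply $d(c_i,c_j) \leq 2$) and each $b_i \in N[c_i]$ lies outside $D \setminus \bigcup_i N[c_i]$, so the second step contributes exactly $k+1$ new elements. Hence $|D'| = |D|$.

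Next, I would verify that $D'$ dominates every $v \in V(G)$ by splitting into cases. If $v \in N[c_i]$ for some $i$, then $N[c_i]$ is a clique since $c_i \in \mathcal{C}$, so $b_i \in D'$ dominates $v$. Otherwise, $v$ is dominated in $D$ by some $d$: if $d \notin \bigcup_i N[c_i]$ then $d \in D'$ still dominates $v$, while if $d \in N[c_i]$ for some $i$, the fact that $v \notin N[c_i]$ rules out $d = c_i$ and forces $d \in N(c_i)$. I would then invoke Claim~\ref{VertexCompleteness} to obtain some $j \in [k+1]$ such that $v$ is complete to $N(c_j)$; in particular $v$ is adjacent to $b_j \in D'$.

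The main subtlety is in this last step: the \emph{statement} of Claim~\ref{VertexCompleteness} only guarantees \emph{some} regular vertex $c \in \mathcal{R}$ to which $v$ is complete, whereas I need the witness to lie in the prescribed family $\{c_1, \ldots, c_{k+1}\}$. However, the \emph{proof} of Claim~\ref{VertexCompleteness} actually produces the witness from the fixed $(k+1)$-tuple of pairwise-distant regular vertices used to derive the forbidden $P_3 + kP_2$; applying that argument verbatim to our present family yields the strengthened form needed here—precisely as in the proof of Claim~\ref{OneVertexInDPerRegularVertex}.
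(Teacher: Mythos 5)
Your proposal is correct and follows essentially the same route as the paper, which proves this corollary precisely by re-running the replacement argument from Claim~\ref{OneVertexInDPerRegularVertex}: cardinality is preserved because $|D\cap N[c_i]|=1$ for each $i$ and the $N[c_i]$ are pairwise disjoint, and domination is verified by the same case analysis (cliqueness of $N[c_i]$ inside, Claim~\ref{VertexCompleteness} outside). You also correctly identify and resolve the subtlety the paper glosses over, namely that the witness in Claim~\ref{VertexCompleteness} must be taken from the prescribed family $c_1,\ldots,c_{k+1}$, which indeed follows by applying that claim's proof verbatim to this family.
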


\begin{claim}\label{AllRegularVerticesAtDistanceFour}
No two regular vertices have a common neighbour. Furthermore, if there are two regular vertices $c_1,c'_1$ at distance three from each other then $G$ is a \yes-instance for \contracd.
\end{claim}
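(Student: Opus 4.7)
My plan is to prove the two assertions separately. For the first, I would exhibit a forbidden induced $P_3+kP_2$ whenever two regular vertices share a neighbour. For the second, I would apply \Cref{RegularNeighbourhoodShift} twice to produce a minimum dominating set that is not a stable set, and conclude via \Cref{NOInstStable}.

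Suppose first that $c, c' \in \mathcal{R}$ share a common neighbour $b$. Let $c_2, \ldots, c_{k+1}$ witness the regularity of $c$ and, for each $i$, pick any $b_i \in N(c_i)$. I claim that $\{c, b, c'\} \cup \bigcup_{i=2}^{k+1}\{c_i, b_i\}$ induces a $P_3+kP_2$: since $c, c' \in C$ are non-adjacent, $c - b - c'$ is an induced $P_3$, while each $\{c_i, b_i\}$ contributes a disjoint edge. Distinctness of the vertices and most non-adjacencies follow directly from the bounds $d(c, c_i) \geq 4$ and $d(c_i, c_j) \geq 4$. The only delicate case is $c' \not\sim b_i$: here I would observe that, since $N(c')$ is a clique containing $b$, any edge $c'b_i$ would force $b \sim b_i$, contradicting $d(b, b_i) \geq d(c, c_i) - 2 \geq 2$. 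The resulting induced $P_3+kP_2$ contradicts the hypothesis, completing the first part.

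For the second part, let $b \in N(c_1)$ and $b' \in N(c_1')$ with $b \sim b'$, which exist because $d(c_1, c_1') = 3$. The aim is to construct a minimum dominating set containing both $b$ and $b'$, which cannot be a stable set, so \Cref{NOInstStable} yields the conclusion. Starting from an arbitrary minimum dominating set, I would first apply \Cref{RegularNeighbourhoodShift} to a witness set of $c_1$ with $b$ as the representative of $c_1$, producing a minimum dominating set $D_1$ containing $b$. Then I would apply the shift a second time to a witness set $\{c_1', v_2', \ldots, v_{k+1}'\}$ of $c_1'$, with $b'$ as the representative of $c_1'$. The main obstacle is preserving $b$ through this second shift: since $d(c_1, c_1') = 3$ we have $b \notin N[c_1']$, and since $c_1', v_2', \ldots, v_{k+1}'$ are pairwise at distance at least four, their closed neighbourhoods are pairwise disjoint, so $b$ lies in at most one $N[v_i']$. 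If no such $v_i'$ exists, $b$ is untouched by the shift; otherwise, I would choose $b$ itself as the representative of the relevant $v_i'$, which is legitimate since $b \in N(v_i')$. The resulting minimum dominating set contains the adjacent pair $\{b, b'\}$ and is therefore not a stable set.
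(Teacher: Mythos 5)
Your proof is correct and takes essentially the same approach as the paper: the distance-two case is ruled out by the same induced $P_3+kP_2$, including the identical clique-neighbourhood argument for the delicate non-adjacency $c'\not\sim b_i$, and the distance-three case combines \Cref{RegularNeighbourhoodShift} with \Cref{NOInstStable} exactly as the paper does. If anything, your sequential double application of the shift, with explicit care to preserve $b$ (noting $b\notin N[c_1']$ and re-choosing $b$ as a representative if needed), is slightly more rigorous than the paper's single displayed formula, which applies the corollary to the union of the two witness families even though $c_1$ and $c_1'$ themselves are only at distance three.
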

\begin{proof}
Suppose that there exist two regular vertices $c_1,c'_1\in\mathcal{R}$ at distance at most three from one another. Let $c_2,\ldots,c_{k+1}\in\mathcal{R}$ (respectively $c'_2,\ldots,c'_{k+1}$) be $k$ regular vertices such that $c_1,\ldots,c_{k+1}$ (respectively $c'_1,\ldots,c'_{k+1}$) are pairwise at distance at least four from one another. For every $i\in[k+1]\setminus\set{1}$, let $b_i$ (respectively $b'_i$) be a neighbour of $c_i$ (respectively $c'_i$) such that $b_i=b'_j$ whenever $c_i=c'_j$. Suppose for a contradiction that $d(c_1,c'_1)= 2$ and let $b_1\in B$ be a common neighbour of $c_1$ and $c'_1$. Observe that $c'_1$ cannot be adjacent to $b_i$ for any $i\geq 2$, since the neighbourhood of $c'_1$ is a clique and $b_i$ is not adjacent to $b_1$ (recall that $c_1,\ldots,c_{k+1}$ are pairwise at distance at least four from one another). It follows that the vertices $c'_1,c_1,\ldots,c_{k+1},b_1,\ldots,b_{k+1}$ induce a $P_3+kP_2$, a contradiction. Suppose now that $d(c_1,c'_1)=3$ and let $b_1\in N(c_1)$ and $b'_1\in N(c'_1)$ be two adjacent vertices. Consider a minimum dominating set $D$ of $G$. Then by \Cref{RegularNeighbourhoodShift} $\left(D\setminus\bigcup_{i\in[k+1]}N[c_i]\cup N[c'_i]\right)\cup\set{b_1,b'_1,\ldots,b_{k+1},b'_{k+1}}$ is a minimum dominating set of $G$ containing an edge; \Cref{NOInstStable} then implies the claim. 
\end{proof}

\begin{claim}\label{OnlyOnePNInC}
Assume that $G$ is a \no-instance for \contracd and let $D$ be a minimum dominating set of $G$. If there exists a vertex $b_0\in B\cap D$ which has more than one private neighbour in $C$ then $|B\cap D|\leq k|A|$.
\end{claim}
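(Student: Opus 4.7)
The plan is to reduce the claim to the per-vertex bound $|N(a)\cap B\cap D|\leq k$ for every $a\in A$: since each $b\in B$ has at least one neighbour in $A$, we have $|B\cap D|\leq\sum_{a\in A}|N(a)\cap B\cap D|$, and the desired bound $|B\cap D|\leq k|A|$ would follow. To establish the per-vertex bound, I would argue by contradiction: suppose some $a\in A$ has $|N(a)\cap B\cap D|\geq k+1$, and let $b^{(1)},\ldots,b^{(k+1)}$ be $k+1$ such vertices. Because $G$ is a no-instance for \contracd, \Cref{NOInstStable} gives that $D$ is stable; so the $b^{(i)}$'s are pairwise non-adjacent, and every $b^{(i)}\neq b_0$ is non-adjacent to $b_0$. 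Since $c_1,c_2$ are private neighbours of $b_0$, they lie in $C\setminus D$, no $b^{(i)}\neq b_0$ is adjacent to $c_1$ or $c_2$, and $c_1\not\sim c_2$ because $C$ is stable; hence $\{c_1,b_0,c_2\}$ already induces a $P_3$.

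Next I would construct an induced copy of $P_3+kP_2$ in $G$, contradicting $P_3+kP_2$-freeness. Writing $G[A]\cong P_3+(k-1)P_2$ with matching edges $x_\ell y_\ell$ ($\ell\in[k-1]$) and $P_3$-part $a_1a_2a_3$, the natural candidate is the vertex set $\{c_1,b_0,c_2\}\cup\{b^{(1)},a\}\cup\bigcup_{\ell=1}^{k-1}\{x_\ell,y_\ell\}$ of size $2k+3$. Since at least $k$ of the $b^{(i)}$'s differ from $b_0$, I can take $b^{(1)}\neq b_0$; the edge $(b^{(1)},a)$ is then disjoint from the $P_3$ on $\{c_1,b_0,c_2\}$, while the $k-1$ matching edges $x_\ell y_\ell$ are pairwise disjoint induced edges within $A$.

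The main obstacle is to verify that the candidate subgraph is genuinely induced and to reconfigure it if it is not. The possible offending adjacencies are: $b_0\sim a$ (equivalent to $b_0\in\{b^{(1)},\ldots,b^{(k+1)}\}$); $a$ adjacent to some $x_\ell$ or $y_\ell$ (which can occur only if $a$ lies in the $(k-1)P_2$-part of $A$); and $b_0$ or $b^{(1)}$ adjacent to some $x_\ell$ or $y_\ell$. In each situation, the abundance of choices---$k+1$ candidates $b^{(i)}$ and two private neighbours $c_1,c_2$---should provide enough flexibility to re-pick the witness. For example, if $b_0\sim a$ I would replace $a$ by an $A$-neighbour of $b^{(1)}$ lying in the $P_3$-part of $A$; if the chosen $b^{(i)}$ is adjacent to some $x_\ell$, I would use the edge $(b^{(i)},x_\ell)$ in place of $(x_\ell,y_\ell)$ and discard $y_\ell$; if $b_0\sim x_\ell$, I would swap to a different matching pair or redirect to the pair containing that neighbour. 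The exhaustive case analysis showing that at least one induced $P_3+kP_2$ always materialises is, I expect, the delicate part of the proof.
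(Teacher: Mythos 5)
Your reduction rests on the per-vertex bound $|N(a)\cap B\cap D|\leq k$ for all $a\in A$, and that bound is simply false, so no amount of case analysis in the construction step can repair the argument. Here is a counterexample with $k=1$. Let $G$ have vertices $a_1,a_2,a_3,b_0,b_1,p,x,y$ and edges $a_1a_2$, $a_2a_3$, $b_0a_1$, $b_0a_2$, $b_0a_3$, $b_0x$, $b_0y$, $b_1a_2$, $b_1p$, $pa_1$, $pa_3$, $px$. Then $A=\{a_1,a_2,a_3\}$ induces a $P_3$, $B=\{b_0,b_1,p\}$ and $C=\{x,y\}$. The graph is $P_3+P_2$-free: for each of the twelve edges $uv$, the set $V(G)\setminus(N[u]\cup N[v])$ is one of $\{x,y\}$, $\{a_2,y\}$, $\{b_1\}$, $\{p\}$, $\{y\}$, $\{b_1,p\}$, $\{b_0,y\}$, each of which induces an edgeless graph or a single edge, hence no $P_3$. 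Moreover, since $N[y]=\{y,b_0\}$, every dominating set contains $y$ or $b_0$; a two-element set containing $y$ would need a second vertex adjacent to all of $a_1,a_2,a_3,b_1,p,x$, and no such vertex exists, so every minimum dominating set contains $b_0$, and its second vertex must dominate $b_1$ and $p$, i.e.\ lie in $N[b_1]\cap N[p]=\{b_1,p\}$. Hence the minimum dominating sets are exactly $\{b_0,b_1\}$ and $\{b_0,p\}$, both stable, so $G$ is a \no-instance by \Cref{NOInstStable}. Taking $D=\{b_0,b_1\}$, the vertex $b_0\in B\cap D$ has two private neighbours $x,y\in C$, so the hypothesis of \Cref{OnlyOnePNInC} holds; yet $|N(a_2)\cap B\cap D|=2>k$.

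The conceptual reason your approach cannot work is visible in this example. The hypothesis you try to contradict supplies $k+1$ vertices of $B\cap D$ that are all adjacent to a single vertex $a$; by stability of $D$ they are pairwise non-adjacent, so they can contribute at most one edge, namely $(b^{(1)},a)$, to a candidate $P_3+kP_2$, and the remaining $k-1$ edges must be found inside $A\setminus N(b_0)$. But nothing in the hypothesis controls $N(b_0)\cap A$; on the contrary, since $\{c_1,b_0,c_2\}$ induces a $P_3$ and $A$ is anticomplete to $C$, the $P_3+kP_2$-freeness of $G$ forces $A\setminus N(b_0)$ to contain no $k$ pairwise non-adjacent edges, i.e.\ $N(b_0)$ must hit every such family in $A$ (in the counterexample $b_0$ is complete to $A$). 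So the edges your construction needs are precisely those whose existence the freeness assumption rules out. This is why the paper's proof keeps $A$ entirely out of the forbidden structure: it assumes $|B\cap D|\geq k|A|+1$, splits $k|A|$ of the surplus vertices into $k$ blocks of size $|A|$, and observes that if some block dominated no vertex of $C$ by itself, swapping that block for $A$ would yield a minimum dominating set containing an edge, contradicting \Cref{NOInstStable}; this produces for each block a vertex $c_i\in C$ with $N[c_i]\cap D$ inside that block, and the $k$ pairs $(c_i,b_{i|A|})$ together with the path on $\{x,b_0,y\}$ form an induced $P_3+kP_2$, every required non-adjacency coming from stability of $D$, stability of $C$, or privacy --- never from assumptions about adjacencies into $A$.
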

\begin{proof}
Observe first that $D$ is a stable set by \Cref{NOInstStable}. Assume that there exists a vertex $b_0\in B\cap D$ which has at least two private neighbours in $C$, say $x$ and $y$. Suppose for a contradiction that there are at least $k|A|$ further vertices in $B\cap D$ besides $b_0$, say $b_1,\ldots,b_{k|A|}$. We claim that for every $i\in[k]$ there exists a vertex $c_i\in C$ such that $N[c_i]\cap D\subseteq\set{b_{(i-1)|A|+1},\ldots,b_{i|A|}}$. Indeed, if for some $i \in [k+1]$ there is no such vertex in $C$ then $\left(D\setminus\set{b_{(i-1)|A|+1},\ldots,b_{i|A|}}\right)\cup A$ is a minimum dominating set of $G$ containing an edge, a contradiction to \Cref{NOInstStable}. Now assume, without loss of generality, that $b_{i|A|}$ is adjacent to $c_i$ for every $i\in[k]$. Then the vertices $b_0,x,y,c_1,\ldots,c_k,b_{|A|},b_{2|A|},\ldots,b_{k|A|}$ induce a $P_3+kP_2$, a contradiction.
\end{proof}

\begin{claim}\label{SharedResponsibilitiesAreDistributed}
Assume that $G$ is a \no-instance for \contracd and let $D$ be a minimum dominating set. If there exists a vertex $c\in C$ such that $\vert N(c)\cap D\vert\geq 2$ then $c$ is adjacent to all the vertices in $B\cap D$ except for at most $k|A|-1$.
\end{claim}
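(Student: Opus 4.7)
The plan is to mimic the argument of Claim~\ref{OnlyOnePNInC} almost verbatim, with the two $D$-neighbours $b_1,b_2 \in N(c)\cap D$ of $c$ playing the role that the two private neighbours $x,y$ of $b_0$ played there: instead of the induced $P_3$ $x b_0 y$, the present argument will hinge on the induced $P_3$ $b_1 c b_2$. I would first record that $D$ is stable by Theorem~\ref{NOInstStable}, that $b_1,b_2 \in B$ (because $c \in C$ and $C$ is stable), and that $b_1 b_2 \notin E(G)$ by stability of $D$, so $b_1 c b_2$ is indeed an induced $P_3$. Then, arguing by contradiction, I would assume that at least $k|A|$ vertices of $B \cap D$ are non-adjacent to $c$ and enumerate them as $b'_1,\ldots,b'_{k|A|}$.

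The first substantive step is to produce, for each $i \in [k]$, a vertex $c_i \in C$ with
\[ N[c_i]\cap D \;\subseteq\; \{b'_{(i-1)|A|+1},\ldots,b'_{i|A|}\}. \]
This is identical to the analogous step of Claim~\ref{OnlyOnePNInC}: if no such $c_i$ existed for some $i$, then $(D \setminus \{b'_{(i-1)|A|+1},\ldots,b'_{i|A|}\}) \cup A$ would still dominate $V(G)$ (every $c' \in C$ retains a dominator in $D$ outside the removed block, and $A$ dominates $A \cup B$) and have cardinality $|D|$, giving a minimum dominating set containing an edge of $G[A]$ and contradicting Theorem~\ref{NOInstStable}. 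After relabelling inside each block I may then assume $b'_{i|A|} c_i \in E(G)$ for every $i \in [k]$.

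Finally, I would claim that the $2k+3$ vertices $\{b_1,c,b_2\} \cup \bigcup_{i=1}^{k}\{b'_{i|A|},c_i\}$ induce a $P_3+kP_2$, contradicting $P_3+kP_2$-freeness. Most non-adjacencies are immediate: $D$ is stable, $C$ is stable, and $c$ is non-adjacent to every $b'_{i|A|}$ by the very choice of the $b'_j$'s; moreover the $c_i$'s are pairwise distinct, for $c_i=c_j$ with $i \neq j$ would force $N[c_i] \cap D$ to sit inside two disjoint blocks and hence be empty, contradicting that $c_i$ is dominated. The step I expect to be the main obstacle is verifying that each $c_i$ is non-adjacent to both $b_1$ and $b_2$: a putative edge $b_s c_i$ with $s \in \{1,2\}$ would force $b_s \in N[c_i] \cap D$, hence $b_s$ would be one of the $b'_j$'s, but this is impossible since $b_1,b_2 \in N(c)$ while no $b'_j$ is adjacent to $c$. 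This subtle interplay between the defining property of the $c_i$'s and the initial choice of the $b'_j$'s as \emph{non}-neighbours of $c$ is precisely what makes the argument go through, and it is the reason the bound $k|A|-1$ (rather than something smaller) is the correct threshold.
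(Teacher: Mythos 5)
Your proof is correct and follows essentially the same route as the paper's: the paper likewise takes the induced $P_3$ formed by $c$ and its two $D$-neighbours, invokes the block argument from Claim~\ref{OnlyOnePNInC} to extract vertices $c_i$ with $N[c_i]\cap D$ confined to a block of $k|A|$ non-neighbours of $c$, and exhibits the same induced $P_3+kP_2$. Your verification of the non-adjacencies (in particular why each $c_i$ avoids the two $D$-neighbours of $c$) is more explicit than the paper's, which simply asserts the induced subgraph.
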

\begin{proof}
Observe first that $D$ is a stable set by \Cref{NOInstStable}. Assume that $c\in C$ has at least two neighbours in $B\cap D$, say $x$ and $y$. Suppose for a contradiction that there are at least $k\vert A\vert$ vertices in $B\cap D$ which are not adjacent to $c$, say $b_1,\ldots,b_{k|A|}$. As shown in the proof of \Cref{OnlyOnePNInC}, there has to be for every $i\in[k]$ a vertex $c_i$ such that $N[c_i]\cap D\subseteq \set{b_{(i-1)|A|+1},\ldots,b_{i|A|}}$. Assume, without loss of generality, that $b_{i|A|}$ is adjacent to $c_i$ for every $i\in[k]$. Then the vertices $c,x,y,c_1,\ldots,c_k,b_{|A|},b_{2|A|},\ldots,b_{k|A|}$ induce a $P_3+kP_2$, a contradiction.
\end{proof}

\begin{corollary}\label{FewVerticesInCWithoutPN}
Assume that $G$ is a \no-instance for \contracd and let $D$ be a minimum dominating set of $G$. If there are at least $|A|$ vertices in $B\cap D$ which do not have a private neighbour in $C$ then $|B\cap D|\leq (k+1)|A|-1$.
\end{corollary}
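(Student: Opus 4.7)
The plan is to argue by contradiction: suppose $G$ is a \no-instance, $D$ is a minimum dominating set, at least $|A|$ vertices of $B\cap D$ have no private neighbour in $C$, and yet $|B\cap D|\geq (k+1)|A|$. The goal is to produce an induced $P_3+kP_2$ in $G$, thereby contradicting the hypothesis. The strategy closely mirrors those of \Cref{OnlyOnePNInC} and \Cref{SharedResponsibilitiesAreDistributed}.

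First I would fix a block $T_0\subseteq B\cap D$ of size $|A|$ whose vertices have no private neighbour in $C$, and then extract from the remaining vertices of $B\cap D$ further blocks $T_1,\ldots,T_k$, each of size $|A|$, so that $T_0,T_1,\ldots,T_k$ are pairwise disjoint. For each $i\in\set{0,1,\ldots,k}$, I would invoke the swap argument used in \Cref{OnlyOnePNInC}: if every $c\in C$ satisfied $N[c]\cap D\not\subseteq T_i$, then $(D\setminus T_i)\cup A$ would be a dominating set of size at most $|D|$ containing an edge (since $G[A]$ contains a $P_3$); this would violate either the minimality of $D$ or, by \Cref{NOInstStable}, the assumption that $G$ is a \no-instance. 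Hence there exists $c_i\in C$ with $N[c_i]\cap D\subseteq T_i$, and because $D$ is stable $c_i\notin D$, so $c_i$ admits a neighbour $b_i\in T_i$.

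The step that really distinguishes this corollary from the earlier claims is the treatment of $T_0$. The vertex $c_0$ cannot be dominated by a single vertex of $T_0$: if $N[c_0]\cap D=\set{b}$ for some $b\in T_0$, then $c_0$ would be a private neighbour in $C$ of $b$, contradicting the defining property of $T_0$. Hence $|N(c_0)\cap D|\geq 2$, and two distinct vertices $b_0^{(1)},b_0^{(2)}\in T_0\cap N(c_0)$ exist, yielding the induced $P_3$ given by $b_0^{(1)}-c_0-b_0^{(2)}$, the missing edge between $b_0^{(1)}$ and $b_0^{(2)}$ being guaranteed by the stability of $D$.

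To finish, it remains to check that $\set{b_0^{(1)},c_0,b_0^{(2)}}\cup\bigcup_{i=1}^{k}\set{b_i,c_i}$ induces $P_3+kP_2$. Non-adjacencies among the $b$'s follow from the stability of $D$, among the $c$'s from the stability of $C$, and the cross non-adjacencies $c_ib_j$ for $i\neq j$ follow from $N(c_i)\cap D\subseteq T_i$ combined with the pairwise disjointness of the blocks. The main obstacle I foresee is spotting the right way to build the $P_3$: the earlier claims construct it from private neighbours, but the defining property of $T_0$ rules that out, so the $P_3$ must instead be placed inside $T_0\cup\set{c_0}$ by exploiting the multiplicity $|N(c_0)\cap D|\geq 2$.
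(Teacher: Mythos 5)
Your proof is correct, but it takes a genuinely different route from the paper's. The paper derives the statement as a true corollary of \Cref{SharedResponsibilitiesAreDistributed}: each of the $|A|$ vertices $b_1,\ldots,b_{|A|}$ without private neighbours in $C$ has only ``shared'' $C$-neighbours, so every such $C$-neighbour has at least two dominators in $B\cap D$ and hence, by \Cref{SharedResponsibilitiesAreDistributed} together with the assumption $|B\cap D|\geq (k+1)|A|$, at least $|A|+1$ of them; consequently $\left(D\setminus\set{b_1,\ldots,b_{|A|}}\right)\cup A$ is still dominating, yielding a minimum dominating set containing an edge and contradicting \Cref{NOInstStable}. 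You bypass \Cref{SharedResponsibilitiesAreDistributed} entirely and instead re-run the block argument of \Cref{OnlyOnePNInC} with $k+1$ disjoint blocks of size $|A|$, designating $T_0$ as the block of vertices with no private neighbour in $C$; your key twist is that the vertex $c_0$ with $N[c_0]\cap D\subseteq T_0$ must then have \emph{two} neighbours in $T_0$ (it cannot be anyone's private neighbour), which supplies the $P_3$ that the earlier claims obtained from private neighbours, while the blocks $T_1,\ldots,T_k$ supply the $kP_2$; the resulting induced $P_3+kP_2$ contradicts $P_3+kP_2$-freeness. What each approach buys: the paper's proof is shorter given the preceding claim, whereas yours is self-contained (needing only \Cref{NOInstStable}, the stability of $C$, and the swap trick of \Cref{OnlyOnePNInC}) and shows the corollary is of the same combinatorial nature as the claims rather than logically downstream of \Cref{SharedResponsibilitiesAreDistributed}. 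One small misattribution to fix: $c_i\notin D$ does not follow from stability of $D$; it follows because $N[c_i]\cap D\subseteq T_i\subseteq B$ while $c_i\in C$, so $c_i\in D$ would force $c_i\in T_i$, which is impossible --- the correction is immediate and does not affect the rest of the argument.
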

\begin{proof}
Assume that $\vert B\cap D\vert\geq (k+1)\vert A\vert$. Suppose for a contradiction that there are at least $|A|$ vertices in $B \cap D$, say $b_1,\ldots,b_{|A|}$, which have no private neighbours in $C$. Then for every $i\in[\vert A\vert]$, any vertex $c\in N(b_i)\cap C$ has to be adjacent to at least two vertices in $B\cap D$ (note indeed that by \Cref{NOInstStable} $c$ does not belong to $D$) and thus by \Cref{SharedResponsibilitiesAreDistributed}, $c$ has to be adjacent to at least $\vert A\vert+1$ vertices in $B\cap D$. But then $(D\setminus\set{b_1,\ldots,b_{|A|}})\cup A$ is a minimum dominating set of $G$ containing an edge, a contradiction to \Cref{NOInstStable}.
\end{proof}

\begin{claim}\label{BCapDIsSmall}
Assume that $G$ is a \no-instance for \contracd and let $D$ be a minimum dominating set of $G$. If there exists a vertex $v\in B\cap D$ which has a private neighbour $c\in N(v)\cap C$ and a private neighbour $b\in N(v)$ such that $c$ is not adjacent to $b$ then $|B\cap D|\leq (k+1)|A|$.
\end{claim}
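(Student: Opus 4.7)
The plan is to argue by contradiction: suppose $|B\cap D|\geq (k+1)|A|+1$. By Theorem~\ref{NOInstStable} the set $D$ is stable. I would choose $(k+1)|A|$ vertices of $(B\cap D)\setminus\{v\}$, label them $b_1,\dots,b_{(k+1)|A|}$, and split them into $k+1$ consecutive blocks $B_1,\dots,B_{k+1}$ of size $|A|$. As in the proof of Claim~\ref{OnlyOnePNInC}, for every $i\in[k+1]$ there must exist $c_i\in C$ with $N[c_i]\cap D\subseteq B_i$, since otherwise $(D\setminus B_i)\cup A$ would be a minimum dominating set of $G$ containing an edge, contradicting Theorem~\ref{NOInstStable}; without loss of generality $c_i$ is adjacent to some $b_i'\in B_i$. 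The natural candidate for the forbidden $P_3+kP_2$ is then $\{c,v,b\}\cup\{c_i,b_i':i\in[k]\}$, with the $P_3$ formed by $c,v,b$ (induced since $c\not\sim b$) and the $P_2$'s formed by the pairs $(c_i,b_i')$. Standard verifications using stability of $D$, the definition of the private neighbours of $v$, and $N[c_i]\cap D\subseteq B_i$ leave a single potential obstruction: an edge between $b$ and some $c_i$.

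I would then split on where $b$ lies. If $b\in C$, then $v$ has two private neighbours in $C$ (namely $c$ and $b$), and Claim~\ref{OnlyOnePNInC} already yields $|B\cap D|\leq k|A|$, contradicting our assumption. If $b\in A$, then $b$ cannot be adjacent to any $c_i\in C$ because vertices of $C$ are at distance $2$ from $A$, so any $k$ of the pairs $(c_i,b_i')$ together with $c,v,b$ induce $P_3+kP_2$. This leaves the main case $b\in B$, where the question is how many $c_i$'s are neighbours of $b$; if there is at most one such index, then using the remaining $k$ blocks still produces the forbidden induced subgraph.

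The main obstacle I expect is the subcase $b\in B$ with $b$ adjacent to at least two of the $c_i$'s, say $c_1$ and $c_2$. Here I would switch the $P_3$ to $c_1,b,c_2$ (induced since $c_1,c_2\in C$ are non-adjacent), reuse the pairs $(c_i,b_i')$ for $i\in\{3,\dots,k+1\}$ not adjacent to $b$ as $k-1$ of the $P_2$'s, and look for one extra $P_2$ to complete the construction. Applying Claim~\ref{SharedResponsibilitiesAreDistributed} to $c_1$ forces $|N(c_1)\cap D|=1$ (otherwise $c_1$ would be adjacent to at least $|A|+2$ vertices of $B\cap D$, while $N(c_1)\cap D$ is contained in the block $B_1$ of size $|A|$), so $c_1$ has a unique $D$-neighbour $d_1\in B_1$; applying Claim~\ref{OnlyOnePNInC} to $d_1$ also bars $d_1$ from having a second private neighbour in $C$. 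Using a vertex of $B_1\setminus\{d_1\}$ together with one of its neighbours in $A$ should then supply the missing $P_2$. The main technical difficulty is verifying that this $A$-neighbour can be chosen non-adjacent to $b$ and to the $b_i'$'s already used; that is where the bookkeeping of the proof is concentrated.
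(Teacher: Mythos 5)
Your setup is sound and runs parallel to the paper's first case: the blocks $B_1,\ldots,B_{k+1}$, the vertices $c_i$ with $N[c_i]\cap D\subseteq B_i$, the candidate $P_3+kP_2$ on $\set{c,v,b}\cup\set{c_i,b'_i}$, and the easy cases ($b\in C$ via \Cref{OnlyOnePNInC}, $b\in A$ via $A$--$C$ non-adjacency, $b$ adjacent to at most one $c_i$) are all correct. The gap is the last subcase, and it is genuine in two respects. First, your plan supplies only \emph{one} extra $P_2$, so it covers only the situation where $b$ is adjacent to exactly two of the $c_i$; if $b$ is adjacent to $m\geq 3$ of them you need $m-1$ extra $P_2$'s, and your source of spare edges (a vertex of $B_i\setminus\set{d_i}$ together with an $A$-neighbour) cannot be iterated, since $A$ is not stable and two such $A$-endpoints may be adjacent. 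Second, even the single extra $P_2$ cannot be secured: whatever partner you pick for $b''$ — in $A$, $B$ or $C$ — it must be non-adjacent to $b$, but privacy of $b$ only controls $N(b)\cap D$ (it forces $N[b]\cap D=\set{v}$), and every candidate partner necessarily lies outside $D$ (a partner in $D$ would violate stability of $D$, as $b''\in D$). Nothing in the structure restricts $N(b)\setminus D$: for instance $b$ may be complete to $A$ and to all of $\set{c_1,\ldots,c_{k+1}}$ without creating a forbidden induced subgraph, and then no choice of $b''$ and $a$ works. Nor can you fall back on a swap within your framework: trading the dominators $d_1,\ldots,d_m$ of the $c_i$ adjacent to $b$ for $\set{b}\cup A$ changes the size by $|A|+1-m$, and since $m\leq k+1<|A|+1=2k+2$ the resulting set is strictly larger than $D$, so no contradiction arises.

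The paper avoids this trap by not using blocks in this claim at all. It first applies \Cref{FewVerticesInCWithoutPN}: since $|B\cap D|\geq(k+1)|A|+1$, at most $|A|-1$ vertices of $B\cap D$ lack private neighbours in $C$, so besides $v$ there are at least $k|A|+1\geq|A|+k$ vertices $b_1,\ldots,b_{|A|+k}\in B\cap D$ with private neighbours $c_1,\ldots,c_{|A|+k}\in C$ — many more disjoint pairs than your $k+1$. Pigeonholing over these, either $k$ of the $c_i$ avoid $b$, which is exactly your first case, or at least $|A|+1$ of the $c_i$ are adjacent to $b$, and then one performs a \emph{swap} rather than a subgraph construction: $\left(D\setminus\set{b_1,\ldots,b_{|A|+1}}\right)\cup\set{b}\cup A$ has size at most $|D|$, dominates $G$ (a vertex of $C$ seeing a removed $b_i$ either has two $D$-neighbours, hence by \Cref{SharedResponsibilitiesAreDistributed} a neighbour among the at least $k|A|$ remaining vertices of $B\cap D$, or is the unique private neighbour $c_i$ of $b_i$ and is therefore adjacent to $b$), and contains an edge of $A$, contradicting \Cref{NOInstStable}. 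This swap is the missing idea: when $b$ dominates many of the private neighbours, you exploit that fact to rebuild a minimum dominating set containing an edge instead of hunting for independent edges. To repair your proof, replace the block construction by the application of \Cref{FewVerticesInCWithoutPN}, so that the ``many adjacencies'' branch is guaranteed at least $|A|+1$ pairs and the swap goes through.
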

\begin{proof}
If a vertex in $B\cap D$ has two private neighbours in $C$ then we conclude by \Cref{OnlyOnePNInC}. Thus, we can assume that no vertex in $B\cap D$ has more than one private neighbour in $C$. Assume that $v\in B\cap D$ has exactly one private neighbour $c\in C$ and assume further that $v$ has a private neighbour $b\in B$ such that $b$ and $c$ are not adjacent. Suppose for a contradiction that $|B\cap D|\geq (k+1)|A|+1$. Then by \Cref{FewVerticesInCWithoutPN} there are at most $|A|-1$ vertices in $B\cap D$ which do not have a private neighbour in $C$. Hence, besides $v$, there are at least $k\vert A\vert+1$ further vertices in $B\cap D$ which do have private neighbours in $C$. Let $b_1,\ldots,b_{|A|+k}\in B\cap D$ be $\vert A\vert+k$ such vertices with private neighbours $c_1,\ldots,c_{|A|+k}\in C$, respectively. By the pigeonhole principle, there are either $k$ indices $i\in[\vert A\vert+k]$ such that $c_i$ is non-adjacent to $b$ or $|A|+1$ indices $i\in[\vert A\vert+k]$ such that $c_i$ is adjacent to $b$. In the first case, assume, without loss of generality, that $c_1,\ldots,c_k$ are non-adjacent to $b$. Then the vertices $c,v,b,b_1,\ldots,b_k,c_1,\ldots,c_k $ induce a $P_3+kP_2$ (recall that by \Cref{NOInstStable} $D$ is a stable set), a contradiction. In the second case, assume, without loss of generality, that $b$ is complete to $\set{c_1,\ldots,c_{\vert A\vert+1}}$. Then by \Cref{SharedResponsibilitiesAreDistributed}, every vertex in $C$ which is adjacent to a vertex in $\set{b_1,\ldots,b_{\vert A\vert+1}}$ is adjacent to a vertex in $\left(\left(B\cap D\right)\setminus\set{b_1,\ldots,b_{\vert A\vert+1}}\right)\cup\set{b}$ as well. Thus, $\left(D\setminus\set{b_1,\ldots,b_{|A|+1}}\right)\cup\set{b}\cup A$ is a minimum dominating set of $G$ containing an edge, a contradiction to \Cref{NOInstStable}.
\end{proof}

\begin{claim}\label{MostVerticesInC}
If $G$ is a \no-instance for \contracd then there exists a minimum dominating set $D$ of $G$ such that $|D\setminus C|\leq (k+2)|A|$.
\end{claim}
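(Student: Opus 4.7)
My plan is to start with an arbitrary minimum dominating set $D$ of $G$ and, whenever $|D\cap B|>(k+1)|A|$, perform a local swap that keeps $|D|$ and the minimum-domination property unchanged while decreasing $|D\cap B|$ by one. Since $|D\cap A|\le |A|$, once $|D\cap B|\le(k+1)|A|$ I will immediately have $|D\setminus C|=|D\cap A|+|D\cap B|\le(k+2)|A|$; as each swap strictly decreases $|D\cap B|$, the process will terminate after at most $|B|$ iterations.

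Suppose $|D\cap B|>(k+1)|A|$. Taking the contrapositives of \Cref{OnlyOnePNInC}, \Cref{FewVerticesInCWithoutPN}, and \Cref{BCapDIsSmall} applied to $D$ yields three structural properties: (a) no vertex in $B\cap D$ has more than one private neighbour in $C$; (b) fewer than $|A|$ vertices in $B\cap D$ have no private neighbour in $C$, so in particular some $v\in B\cap D$ admits a (unique, by (a)) private neighbour $c_v\in C$; and (c) every private neighbour of such a $v$ is either $c_v$ or a neighbour of $c_v$. Crucially, since vertices of $A$ are at distance two from $C$ and therefore non-adjacent to $c_v$, property (c) additionally rules out private neighbours of $v$ lying in $A$, so every private neighbour of $v$ belongs to $\set{v,c_v}\cup\left(B\cap N(v)\cap N(c_v)\right)$.

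With such a pair $v,c_v$ in hand, I would set $D'=(D\setminus\set{v})\cup\set{c_v}$ and check that $D'$ is again a minimum dominating set. The only vertices whose set of $D$-dominators is affected by the swap are the private neighbours of $v$, and by the description above each of them lies in $N[c_v]$, hence is dominated by $c_v\in D'$; every other vertex retains a dominator in $D\setminus\set{v}\subseteq D'$. Since $c_v\notin D$, we have $|D'|=|D|$, and $D'$ remains stable because $c_v$'s only neighbour in $D$ was $v$. Thus $D'$ is a minimum dominating set of $G$ with $|D'\cap B|=|D\cap B|-1$, and the same reasoning can be reapplied to $D'$ as long as $|D'\cap B|>(k+1)|A|$.

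I expect the main obstacle to be the verification that the single swap preserves domination, which is precisely where property (c) must be combined with the absence of $A$--$C$ edges; without that observation a private neighbour of $v$ lying in $A$ could be orphaned once $v$ is removed, and the swap would fail.
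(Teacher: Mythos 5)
Your proposal is correct and takes essentially the same route as the paper's proof: it rests on the same three ingredients (\Cref{OnlyOnePNInC}, \Cref{FewVerticesInCWithoutPN} and \Cref{BCapDIsSmall}) and the same swap $(D\setminus\{v\})\cup\{c_v\}$, with the bound $|D\cap A|+|D\cap B|\leq |A|+(k+1)|A|$ concluding identically. The only difference is organisational: the paper chooses $D$ minimizing $|B\cap D|$ and derives a contradiction (the swap either succeeds, contradicting extremality, or fails, producing a private neighbour non-adjacent to $c_v$ that contradicts \Cref{BCapDIsSmall}), whereas you use the contrapositive of \Cref{BCapDIsSmall} to show the swap always succeeds and iterate until $|B\cap D|\leq (k+1)|A|$ --- the same argument with different bookkeeping.
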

\begin{proof}
Let $D$ be a minimum dominating set of $G$ such that $\vert B\cap D\vert$ is minimal amongst all minimum dominating sets of $G$. If $\vert B\cap D\vert\geq (k+1)\vert A\vert +1$, then by \Cref{OnlyOnePNInC} every vertex in $B\cap D$ has at most one private neighbour in $C$ and \Cref{FewVerticesInCWithoutPN} ensures that there is at least one vertex $b\in B\cap D$ which does have a private neighbour $c\in C$. But now either $(D\setminus\set{b})\cup\set{c}$ is a minimum dominating set of $G$, contradicting the fact that $\vert B\cap D\vert$ is minimal amongst all minimum dominating sets of $G$, or $b$ has a private neighbour $p\in N(b)$ which is not adjacent to $c$, a contradiction to \Cref{BCapDIsSmall}.
Hence $\vert B\cap D\vert\leq (k+1)\vert A\vert$ and since $\vert D\cap A\vert\leq\vert A\vert$ the claim follows.
\end{proof}

\begin{claim}\label{MostNeighbourhoodsAreCliques}
Assume that $G$ is a \no-instance for \contracd and let $D$ be a minimum dominating set of $G$. If $S\subseteq C\cap D$ is a subset of vertices which are pairwise at distance at least three from one another and every vertex in $S$ has two non-adjacent neighbours then $\vert S\vert\leq (k+1)^2-1$.
\end{claim}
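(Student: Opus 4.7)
The plan is to derive a contradiction from $|S| \geq (k+1)^2$ by exhibiting an induced $P_3 + kP_2$ in $G$. For each $v \in S$, fix a pair $b_v, b'_v$ of non-adjacent neighbours in $N(v)$, which exists by hypothesis; since $v \in C$ and $C$ is a stable set, both lie in $B$, and $Q_v := \{b_v, v, b'_v\}$ induces a $P_3$ in $G$. The pairwise distance-at-least-three condition on $S$ ensures that for distinct $u, v \in S$ one has $u \not\sim v$, $u \not\sim b_v, b'_v$, and $v \not\sim b_u, b'_u$, so the only possible edges of $G$ between $Q_u$ and $Q_v$ lie between $\{b_u, b'_u\}$ and $\{b_v, b'_v\}$.

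The first key observation is that $(k+1)P_3$ contains $P_3 + kP_2$ as an induced subgraph (drop one endpoint from each of $k$ of the $P_3$-copies); hence it suffices to exhibit $k+1$ vertices of $S$ whose $Q_v$'s are pairwise non-adjacent in $G$. I would formalise this by introducing the \emph{conflict graph} $H$ on vertex set $S$ in which $uv$ is an edge iff some edge of $G$ joins $\{b_u, b'_u\}$ and $\{b_v, b'_v\}$. The aim becomes $\alpha(H) \geq k+1$.

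The crucial step is to establish $\Delta(H) \leq k$: once this holds, a standard greedy independent-set argument yields $\alpha(H) \geq |S|/(\Delta(H)+1) \geq (k+1)^2/(k+1) = k+1$, finishing the proof. To bound the degree, assume some $v \in S$ has $k+1$ conflict buddies $u_1, \ldots, u_{k+1} \in S$. I would try to directly construct the forbidden subgraph using $Q_v$ as the $P_3$ part together with, for $k$ of the $u_i$, a chosen neighbour $z_i \in N(u_i)$ satisfying $z_i \not\sim b_v, b'_v$ and $z_i \not\sim z_j$ for $j < i$; the whole set $Q_v \cup \bigcup_{i \in [k]} \{u_i, z_i\}$ would then induce a $P_3 + kP_2$, contradicting the hypothesis on $G$. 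To produce such $z_i$, I would pigeonhole on the adjacency pattern of $\{b_{u_i}, b'_{u_i}\}$ with respect to $\{b_v, b'_v\}$, in the spirit of Claims \ref{SharedResponsibilitiesAreDistributed} and \ref{BCapDIsSmall}, and use the freedom to swap between $b_{u_i}$ and $b'_{u_i}$; in the degenerate case where all neighbours of some $u_i$ lie in $N[b_v] \cup N[b'_v]$, I would reverse the roles (letting $Q_{u_i}$ host the $P_3$ and $v$ contribute a $P_2$) and conclude symmetrically.

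The principal obstacle I foresee is the simultaneous guarantee that the $k$ selected $z_i$ can be pairwise non-adjacent \emph{and} each non-adjacent to $\{b_v, b'_v\}$: managing both constraints is where the threshold $k+1$ should enter tightly, and it is likely to require a careful case analysis on how the conflict witnesses distribute between $b_v$ and $b'_v$, parallel in spirit to the pigeonhole of Claim \ref{BCapDIsSmall}. If the greedy-plus-degree-bound plan proves too rigid, the fallback is an iterative selection of $v_0, v_1, \ldots, v_k$ maintaining the invariant that the pool of remaining vertices has size at least $(k+1-i)(k+1)$, arguing again via $P_3+kP_2$-freeness that each step eliminates at most $k+1$ candidates.
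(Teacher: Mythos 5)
Your plan never uses the hypotheses that $G$ is a \no-instance and that $S$ sits inside a minimum dominating set $D$; it relies only on $P_3+kP_2$-freeness together with the distance and neighbourhood conditions on $S$. No such proof can succeed, because the statement is false under those weaker hypotheses. Take two cliques $X=\{x_1,\ldots,x_n\}$ and $Y=\{y_1,\ldots,y_n\}$ with no edges between them, and a stable set $S=\{s_1,\ldots,s_n\}$ where $s_i$ is adjacent exactly to $x_i$ and $y_i$. Every edge of this graph meets $X$ or meets $Y$, so in any induced subgraph all of whose components contain an edge, at most one component meets the clique $X$ and at most one meets the clique $Y$; hence there are at most two such components, and the graph is $P_3+kP_2$-free for every $k\geq 2$. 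The vertices of $S$ are pairwise at distance exactly three and each has two non-adjacent neighbours ($x_i\not\sim y_i$), yet $n$ is arbitrary, so $|S|$ can exceed $(k+1)^2-1$ by as much as you like. Concretely, your crucial step $\Delta(H)\leq k$ is exactly what breaks: here the conflict graph $H$ is complete (since $x_i\sim x_j$ for all $i\neq j$), so $\alpha(H)=1$, and your degenerate-case role reversal cannot help since the construction is symmetric in the indices. This graph does not contradict the claim itself only because it is a \yes-instance: $\{x_1,\ldots,x_{n-1},y_n\}$ is a minimum dominating set containing an edge.

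Your first observation does coincide with the paper's opening step: if $k+1$ vertices of $S$ were pairwise at distance at least four, their chosen $P_3$'s would induce a $(k+1)P_3$ and hence a $P_3+kP_2$, so among any $k+1$ vertices of $S$ two must be at distance exactly three. But from that point on the paper argues completely differently, and in a way that essentially uses \Cref{NOInstStable}: it iteratively extracts maximum sets $S_i\subseteq N(S)$ inducing exactly one edge such that no two of their vertices share a neighbour in $S$, and uses the maximality of consecutive sets $S_{i-1},S_i$ to show that replacing $N(S_i\cup S_{i-1})\cap S$ by $S_i\cup S_{i-1}$ inside $D$ yields a \emph{minimum dominating set containing an edge}, contradicting the assumption that $G$ is a \no-instance. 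That swapping argument, which exploits both the minimality of $D$ and the stability of every minimum dominating set, is the ingredient your proposal is missing, and the counterexample above shows it cannot be avoided.
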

\begin{proof}
Assume first that $S'=\set{c_1,\ldots,c_{k+1}}\subseteq C\cap D$ is a set of $k+1$ vertices which are pairwise at distance at least three from one another and for every $i\in[k+1]$ there are two non-adjacent vertices $b_i, b'_i\in N(c_i)$. If for every $i,j\in[k+1]$ the vertices $c_i$ and $c_j$ were at distance at least four then the vertices $b_1,b'_1,c_1,\ldots,b_{k+1},b'_{k+1},c_{k+1}$ would induce a $(k+1)P_3$, a contradiction. Hence there are two indices $i,j\in[k+1]$ such that $c_i$ and $c_j$ are at distance exactly three from one another. 

Now suppose for a contradiction that there is a set $S\subseteq C\cap D$ of at least $(k+1)^2$ vertices which are pairwise at distance at least three from one another and such that for every vertex $v\in S$ there are two vertices in $N(v)$ which are not adjacent. By the above, there must exist two vertices in $S$ at distance exactly three. Let $S_1\subseteq N(S)$ be a maximum subset of $N(S)$ such that $G[S_1]$ contains exactly one edge and no two vertices in $S_1$ share a common neighbour in $S$. Observe that $\vert N(S_1)\cap S\vert=\vert S_1\vert$ and that $S_1\cup (N(S_1)\cap S)$ induces a $P_4+(\vert S_1\vert-2)P_2$. This implies in particular that $\vert S_1\vert\leq k+1$. We construct a sequence of sets of vertices according to the following procedure.

\begin{itemize}
    \item[1.] Initialize $i=1$. Set $C_1=N(S_1)\cap S$ and $B_1=N(C_1)$.
    \item[2.] Increase $i$ by one.
    \item[3.] Let $S_i\subset N(S)\setminus B_{i-1}$ be a maximum set of vertices such that $G[S_i]$ contains exactly one edge and no two vertices in $S_i$ share a common neighbour in $S$. Set $C_i=C_{i-1}\cup (N(S_i)\cap S)$ and $B_i=B_{i-1}\cup N(C_i)$.
    \item[4.] If $\vert S_i\vert=\vert S_{i-1}\vert$, stop the procedure. Otherwise, return to step 2.
\end{itemize}

Consider the value of $i$ at the end of the procedure (note that $i\geq 2$). Observe that since for any $j \in [i-1] \setminus \{1\}$, $|S_j| < |S_{j-1}|$ and $|S_1| \leq k + 1$, it follows that for any $j \in [i-1]$, $|S_j| \leq k + 2 - j$. Let us show that $|S_i| \geq 2$. Since for any $j \in [i-1]$, $|S_j| \leq k + 1$, we have that $|S \setminus C_j| = |S| - \sum_{p = 1}^j |S_p| \geq (k+1)^2 - j(k+1)$. Thus if $i \leq k+1$ then for any $j \in [i-1]$, $|S \setminus C_j| \geq k+1$ which implies by the above that $|S_j| \geq 2$ for any $j \in [i]$. We now claim that $i$ cannot be larger than $k+1$. Indeed, if $i > k+1$ then for any $j \in [k+1] \setminus {1}$, $|S_j| < |S_{j-1}|$ with $|S_{k+1}| \geq 2$ as shown previously; but $|S_j| \leq k + 2 - j$ for any $j \in [i-1]$ which implies that $|S_{k+1}| \leq 1$, a contradiction. Thus $i\leq k+1$ and so $|S_i| \geq 2$.

Now observe that for any vertex $c\in N(S_i)\cap S$, every neighbour $v\in N(c)$ has to be adjacent to $S_{i-1}$ as otherwise the procedure would have output $S_{i-1}\cup\set{v}$ instead of $S_{i-1}$. Furthermore, for any vertex $c\in N(S_{i-1})\cap S$ every neighbour $v\in N(c)$ has to be adjacent to a vertex in $S_{i}$ as otherwise the procedure would have output $S_i\cup\set{v}$ instead of $S_{i-1}$ (recall that $\vert S_i\vert=\vert S_{i-1}\vert$). It follows that $\left(D\setminus \left(N\left(S_i\cup S_{i-1}\right)\cap S\right)\right)\cup\left(S_i\cup S_{i-1}\right)$ is a minimum dominating set of $G$ containing an edge, a contradiction to \Cref{NOInstStable}.
\end{proof}

\begin{claim}\label{AlmostAllAreDistanceThree}
Assume that $G$ is a \no-instance for \contracd and let $D$ be a minimum dominating set of $G$. Then the number of vertices in $C\cap D$ which are at distance two from another vertex in $C\cap D$ is at most $2|A|+(k+1)^2-3$.
\end{claim}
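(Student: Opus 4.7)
Let $T \subseteq C \cap D$ denote the set of vertices at distance $2$ from another vertex of $C \cap D$. The plan is to assume for contradiction that $|T| \geq 2|A| + (k+1)^2 - 2$ and to decompose $T$ into three disjoint pieces whose sizes can each be controlled using the earlier claims.

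First, I will greedily extract a maximal subset $S \subseteq T$ whose vertices are pairwise at distance at least $3$ in $G$ and each of which has two non-adjacent neighbours. By \Cref{MostNeighbourhoodsAreCliques}, $|S| \leq (k+1)^2 - 1$. Since $D$ is a stable set by \Cref{NOInstStable}, every $v \in T \setminus S$ must fail one of these two defining properties: either (a) $v$ lies at distance exactly $2$ in $G$ from some vertex of $S$, or (b) the neighbourhood $N(v)$ is a clique, so that $v \in \mathcal{C}$.

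For group (a), I will exploit that each $u \in S$ has two non-adjacent neighbours $b, b' \in B$: if many group-(a) vertices share their distance-$2$ witness with $u$, a vertex of $B$ ends up as a common neighbour of too many elements of $C \cap D$, and arguing analogously to \Cref{SharedResponsibilitiesAreDistributed} lets me swap a portion of $D$ for $A$ and produce a minimum dominating set containing an edge, contradicting \Cref{NOInstStable}. This should cap the group-(a) count by $|A|-1$. For group (b), I will distinguish whether $v \in T \cap \mathcal{C}$ is regular or not: for a regular $v$, \Cref{RegularNeighbourhoodShift} lets me shift $v$ to a neighbour $b \in N(v)$, creating the edge $bu$ inside the modified minimum dominating set (where $u \in C \cap D$ is the distance-$2$ partner of $v$); for non-regular $v$, \Cref{VertexCompleteness} together with the $P_3+kP_2$-free hypothesis constrains the distance-$2$ witness structure enough to bound this group by a further $|A|-1$, again via an $A$-replacement argument that introduces an edge into $D$.

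Summing the three bounds yields $|T| \leq (k+1)^2 - 1 + (|A|-1) + (|A|-1) = 2|A| + (k+1)^2 - 3$, contradicting the assumption. The step I expect to be the main obstacle is the bookkeeping in the $A$-for-$D$ swaps used in groups (a) and (b): each such modification must genuinely preserve the domination number while introducing an edge into the resulting set, which requires careful tracking of private neighbours in both $B$ and $C$ so that no vertex becomes undominated after the replacement.
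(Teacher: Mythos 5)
Your decomposition is not the paper's: the paper never extracts a scattered subset of $C\cap D$ first, but instead works with subsets of $B$, and this difference is exactly where your proof breaks.

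The fatal gap is group (a). Your argument presumes that many group-(a) vertices must share their distance-two witness, so that some $b\in B$ becomes a common neighbour of many elements of $C\cap D$, after which a swap with $A$ applies. Nothing forces this: a single $u\in S$ may have many distinct neighbours $b_1,\ldots,b_m\in N(u)$, each adjacent to exactly one further vertex $v_i\in C\cap D$, with the $v_i$ pairwise far apart. Then all $v_i$ lie in group (a) while every vertex of $B$ has only two neighbours in $C\cap D$, and no single-vertex swap yields a contradiction (removing the $v_i$ and adding $A$ leaves the $v_i$ themselves undominated; adding one $b_i$ per removed $v_i$ destroys the cardinality count). To dominate a batch of removed $C$-vertices one must add a whole covering set $S'\subseteq B$ together with $A$, and the accounting only closes if $|N(S')\cap C\cap D|-|S'|\geq |A|$. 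This is precisely why the paper takes $S'$ to be a \emph{maximizer} of $|N(S')\cap C\cap D|-|S'|$ over all subsets of $B$: maximality together with minimum size of the maximizer shows that each of its vertices has two private neighbours in $C\cap D$, which yields $|N(S')\cap C\cap D|\leq 2|A|-2$, and maximality further shows that the vertices of $C\cap D$ outside $N(S')$ pairwise share no common neighbour, which is what makes \Cref{MostNeighbourhoodsAreCliques} applicable to them. Your greedy, local argument has no mechanism producing either fact, and the bound $|A|-1$ you assert for group (a) is not what any such argument delivers (the provable bound for the corresponding piece is $2|A|-2$).

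Group (b) is also mishandled, though this part is repairable. \Cref{VertexCompleteness} is a statement about neighbourhoods of \emph{regular} vertices; it says nothing about a non-regular vertex of $C$ whose neighbourhood is a clique, so the non-regular half of your case analysis has no supporting lemma. In fact no case distinction (and no appeal to \Cref{RegularNeighbourhoodShift}) is needed, and the correct bound is $0$, not $|A|-1$: if $v\in C\cap D$ has a clique neighbourhood and is at distance two from some $u\in C\cap D$, let $b$ be a common neighbour; since $N(v)$ is a clique we have $N[v]\subseteq N[b]$, so $(D\setminus\{v\})\cup\{b\}$ is again a minimum dominating set and it contains the edge $bu$, contradicting \Cref{NOInstStable}. (This is the paper's argument for its set $C''$.) So although your arithmetic lands on $2|A|+(k+1)^2-3$, it does so by combining two unjustified intermediate bounds; the missing idea is the global deficiency-maximization over subsets of $B$, which is the heart of the paper's proof.
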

\begin{proof}
If every two vertices in $C\cap D$ are at distance at least three from one another then we are done. Thus assume that there are two vertices in $C\cap D$ which are at distance two from one another. Let $\mathcal{S}=\arg\max_{S\subseteq B}\vert N(S)\cap C\cap D\vert-\vert S\vert$ and let $S\in\mathcal{S}$ be a set of minimum size in $\mathcal{S}$. Note that since there are two vertices in $C\cap D$ which have a common neighbour, $S$ is non-empty. If $|N(S)\cap C\cap D|\geq |A|+|S|$ then $(D\setminus (N(S)\cap C\cap D)) \cup S\cup A$ is a dominating set of $G$ of cardinality at most $\vert D\vert$ which contains an edge, a contradiction to \Cref{NOInstStable}. Hence $\vert N(S)\cap C\cap D\vert<\vert S\vert+ \vert A\vert$. We now claim that every vertex in $S$ is adjacent to two vertices in $C\cap D$ which are not adjacent to any other vertex in $S$. Indeed, if a vertex $s\in S$ has no neighbour in $C\cap D$ which is not adjacent to any other vertex in $S$ then we could remove $s$ from $S$ without changing the cardinality of $N(S)\cap C\cap D$, thereby contradicting the fact that $S \in \mathcal{S}$. If a vertex $s\in S$ has only one neighbour $c$ in $C\cap D$ which is not adjacent to any other vertex in $S$ then removing $s$ from $S$ would only remove $c$ from $N(S)\cap C\cap D$, thus leaving the value of $|N(S)\cap C\cap D|-|S|$ unchanged while decreasing the cardinality of $S$, a contradiction to minimality of $\vert S\vert$. This implies in particular that $\vert N(S)\cap C\cap D\vert\geq 2\vert S\vert$ which combined with the inequality above leads to $|S|<|A|$ and $|N(S)\cap C\cap D|\leq 2|A|-2$. Now denote by $C'=(C\cap D)\setminus N(S)$ the set of all vertices in $C\cap D$ which are not adjacent to a vertex in $S$. Observe that no wo vertices $c,c'\in C'$ can have a common neighbour $b$, as otherwise $S'=S\cup\set{b}$ would be such that $\vert S'\vert=\vert S\vert+1$ and $\vert N(S')\cap C\cap D\vert\geq \vert N(S)\cap C\cap D\vert +2$ and thus $\vert N(S')\cap C\cap D\vert-\vert S'\vert>\vert N(S)\cap C\cap D\vert-\vert S\vert$, thereby contradicting the fact that $S \in \mathcal{S}$.
Thus every two vertices in $C'$ are at distance at least three from one another and so by \Cref{MostNeighbourhoodsAreCliques}, at most $(k+1)^2-1$ vertices in $C'$ do not have cliques as neighbourhoods. Denote by $C''\subset C'$ the set of vertices whose neighbourhoods are cliques. Observe that no vertex $c$ in $C''$ can be at distance two to another vertex $c'$ in $C\cap D$ as otherwise we could remove $c$ from $D$ and replace it with a common neighbour of $c$ and $c'$, yielding a minimum dominating set of $G$ containing an edge, a contradiction to \Cref{NOInstStable}. Thus, every vertex in $C\cap D$ which has a common neighbour with another vertex in $C\cap D$ must be contained in $N(S)\cap C\cap D$ or in $C'\setminus C''$, which together have cardinality at most $2|A|+(k+1)^2-3$.
\end{proof}

\begin{corollary}\label{MostVerticesAreRegular}
If $G$ is a \no-instance for \contracd then there exists a minimum dominating set $D$ of $G$ such that $\vert D\setminus\mathcal{R}\vert\leq 2(\vert A\vert +(k+1)^2)+(k+2)\vert A\vert+k-4$.
\end{corollary}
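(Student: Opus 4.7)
The plan is to start from a minimum dominating set $D$ provided by \Cref{MostVerticesInC}, so that the bound $|D \setminus C| \leq (k+2)|A|$ holds for free, and then argue $|D \cap (C \setminus \mathcal{R})| \leq 2|A| + 2(k+1)^2 + k - 4$. To this end, I split $D \cap C$ into the subset $X_1$ of vertices that are at distance two from some other vertex of $D \cap C$ and its complement $X_2$ inside $D \cap C$. \Cref{AlmostAllAreDistanceThree} directly yields $|X_1| \leq 2|A| + (k+1)^2 - 3$, and every vertex of $X_2$ is pairwise at distance at least three from all other vertices of $D \cap C$; applying \Cref{MostNeighbourhoodsAreCliques} to the subset $X_2 \setminus \mathcal{C}$ (whose members, by definition of $\mathcal{C}$, each have two non-adjacent neighbours) bounds its size by $(k+1)^2 - 1$.

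Setting $T = X_2 \cap \mathcal{C}$, it then suffices to show $|T \setminus \mathcal{R}| \leq k$. The plan is to first prove the stronger statement that the vertices of $T$ are pairwise at distance at least four. Suppose for contradiction that $c_1, c_2 \in T$ are at distance exactly three, with shortest path $c_1 b_1 b_2 c_2$. Because $c_1, c_2 \in \mathcal{C}$ have clique neighbourhoods, $N[c_1] \subseteq N[b_1]$ and $N[c_2] \subseteq N[b_2]$, so $D' = (D \setminus \{c_1, c_2\}) \cup \{b_1, b_2\}$ is still a dominating set of $G$. Minimality of $D$ forces $b_1, b_2 \notin D$ (otherwise $D \setminus \{c_1\}$ or $D \setminus \{c_2\}$ would already dominate), hence $|D'| = |D|$, and $D'$ is a minimum dominating set containing the edge $b_1 b_2$, contradicting \Cref{NOInstStable}.

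Once $T$ is known to be pairwise at distance at least four, the inequality $|T \setminus \mathcal{R}| \leq k$ is immediate. Indeed, if $|T| \geq k + 1$, then for any $c \in T$ the remaining $k$ vertices of $T$ belong to $\mathcal{C}$ and together with $c$ form a family pairwise at distance at least four, witnessing $c \in \mathcal{R}$; therefore $T \subseteq \mathcal{R}$ in this case. Otherwise $|T| \leq k$ and the bound is trivial. Summing the four contributions $(k+2)|A|$, $2|A| + (k+1)^2 - 3$, $(k+1)^2 - 1$, and $k$ gives exactly $2(|A| + (k+1)^2) + (k+2)|A| + k - 4$.

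The main obstacle is the distance-three argument: one must exploit the fact that the $c_i$ lie in $\mathcal{C}$ so that swapping them for the interior vertices of the connecting path preserves domination, and then use minimality of $D$ to rule out the degenerate case $b_1 \in D$ or $b_2 \in D$. Everything else is routine bookkeeping that stitches together \Cref{MostVerticesInC}, \Cref{AlmostAllAreDistanceThree}, \Cref{MostNeighbourhoodsAreCliques} and \Cref{NOInstStable}.
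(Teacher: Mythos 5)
Your proposal is correct and follows essentially the same route as the paper: the same three-way split of $D\cap C$ (your $X_1$, $X_2\setminus\mathcal{C}$, $T=X_2\cap\mathcal{C}$ are exactly the paper's $(C\cap D)\setminus C_1$, $C_1\setminus C_2$, $C_2$), the same distance-three swap producing a minimum dominating set with an edge, and the same dichotomy ``either $|T|\leq k$ or $T\subseteq\mathcal{R}$''. The only difference is cosmetic: you spell out why the swap preserves domination ($N[c_i]\subseteq N[b_i]$) and why $b_1,b_2\notin D$, details the paper leaves implicit.
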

\begin{proof}
It follows from \Cref{MostVerticesInC} that there is a minimum dominating set $D$ such that $\vert D\setminus C\vert\leq (k+2)\vert A\vert +k$. Let $C_1\subset C\cap D$ be the set of the vertices in $C\cap D$ which are at distance at least three to every other vertex in $C\cap D$. Let $C_2\subseteq C_1$ be the set of the vertices in $C_1$ whose neighbourhoods are cliques. Suppose for a contradiction that there are two vertices $c,c'\in C_2$ which are at distance three. Let $b\in N(c)$ and $b'\in N(c')$ be two adjacent vertices. Then $(D\setminus\set{c,c'})\cup\set{b,b'}$ is a minimum dominating set containing an edge, a contradiction to \Cref{NOInstStable}. Thus, the vertices in $C_2$ are pairwise at distance at least four from one another. It follows that either $\vert C_2\vert\leq k$ or $C_2\subseteq\mathcal{R}$. Since by \Cref{AlmostAllAreDistanceThree} $\vert (C\cap D)\setminus C_1\vert\leq 2\vert A\vert+ (k+1)^2-3$ and by \Cref{MostNeighbourhoodsAreCliques} $\vert C_1\setminus C_2\vert\leq (k+1)^2-1$, the claim follows.
\end{proof}

We now present an algorithm which determines in polynomial time whether $G$ is a \yes-instance of \contracd or not. In the following, we let $f(k)=2(\vert A\vert +(k+1)^2)+(k+2)\vert A\vert+k-4$.

\begin{itemize}
    \item[1.] Determine $A$, $B$, $C$ and $\mathcal{R}$.
     \begin{itemize}
     \item[1.1] If $\mathcal{R}=\varnothing$, check if there exists a dominating set of size at most $f(k)$.
     \begin{itemize}
     \item[1.1.1] If the answer is no, then output \yes.
     \item[1.1.2] Else apply \Cref{boundedgamma}.
     \end{itemize}
     \item[1.2] Else go to 2.
     \end{itemize}
    \item[2.] Check whether there exist two regular vertices in $\mathcal{R}$ which are at distance at/. most three from one another. If so, output \yes.
    \item[3.] Let $V_1$ be the set of vertices at distance exactly one from $N[\mathcal{R}]$ and let $V_2 = V(G) \setminus (N[\mathcal{R}] \cup V_1)$. If $V_2 = \varnothing$, output \no.
    \item[4.] Determine $\mathcal{S}=\set{S\subseteq V_1 \cup V_2\colon \vert S\vert\leq f(k), \forall x\in V_2, N(x)\cap S\neq\varnothing}$. If $\mathcal{S}=\varnothing$, output \yes.
    \item[5.] Let $\mathcal{S}'$ be the family of all sets in $\mathcal{S}$ of minimum size. 
    \begin{itemize}
    \item[(i)] If there exists a set $S\in\mathcal{S}'$ containing an edge, output \yes. 
    \item[(ii)] If there exists a set $S\in\mathcal{S}'$ such that $S \cap V_1 \neq \varnothing$, output \yes.
    \end{itemize}
    \item[6.] Output \no.
\end{itemize}

Finally, let us show that this algorithm outputs the correct answer. In case $\mathcal{R}=\varnothing$ then by \Cref{MostVerticesAreRegular} $G$ is a \yes-instance for \contracd if there exists no dominating set of size at most $f(k)$ (see step 1.1.1). If such a set exists, then we conclude using \Cref{boundedgamma} (see step 1.1.2). If in step 2, two regular vertices at distance at most three from one another are found then by \Cref{AllRegularVerticesAtDistanceFour}, $G$ is a \yes-instance for \contracd. Otherwise, any two regular vertices are at distance at least four from one another and by \Cref{RegularNeighbourhoodShift}, there exists a minimum dominating set $D$ of $G$ such that for any regular vertex $c \in \mathcal{R}$, $D \cap N[c] = \{b_c\}$ where $b_c\in N(c) \cap B$. In the following, we denote by $D' = \bigcup_{c \in \mathcal{R}} \set{b_c}$. Note that by \Cref{VertexCompleteness}, for any $x \in N[\mathcal{R}] \cup V_1$, $N(x) \cap D' \neq \varnothing$. Now if $V_2 = \varnothing$, then we conclude by \Cref{OneVertexInDPerRegularVertex} and the fact that any two regular vertices are at distance at least four from one another, that any minimum total dominating set of $G$ is a stable set, that is, $G$ is a \no-instance for \contracd (see step 3). Otherwise $V_2 \neq \varnothing$ and if $G$ is a \no-instance for \contracd, then by \Cref{MostVerticesAreRegular} there must exist a set $S \subseteq V_1 \cup V_2$ of cardinality at most $f(k)$ such that for any $x \in V_2$, $N(x) \cap S \neq \varnothing$. Thus, if $\mathcal{S} = \varnothing$ then $G$ is a \yes-instance for \contracd (see step 4). Otherwise $\mathcal{S} \neq \varnothing$, and for any $S \in \mathcal{S}'$, $S \cup D'$ is a minimum dominating set of $G$. It then follows from \Cref{NOInstStable} that if there exists $S \in \mathcal{S}'$ such that $S$ contains an edge then $G$ is a \yes-instance for \contracd (see step 5(i)); otherwise, any $S \in \mathcal{S}'$ is a stable set and if there exists a set $S \in \mathcal{S}'$ such that $S \cap V_1 \neq \varnothing$ then $S \cup D'$ contains an edge and so, $G$ is a \yes-instance by \Cref{NOInstStable} (see step 5(ii)). Otherwise, for any $S \in \mathcal{S}'$, $S$ is a stable set and $S \cap V_1 = \varnothing$ which implies that $S \cup D'$ is a stable set and thus, $G$ is a \no-instance for \contracd. As every step can clearly be done in polynomial time, this concludes the proof of \Cref{thm:P3kP2}.

\printbibliography
\end{document}